\theoremstyle{plain}
\newtheorem{thm}{Theorem}[section]
\newtheorem{prop}[thm]{Proposition}
\newtheorem{lemma}[thm]{Lemma}
\newtheorem{cor}[thm]{Corollary}
\newtheorem{mainthm}{Main Theorem}
\theoremstyle{definition}
\newtheorem{df}[thm]{Definition}
\newtheorem*{notn*}{Notation}
\newtheorem{remark}[thm]{Remark}
\DeclareMathOperator{\End}{End}
\def\Q{\mathbb{Q}}
\def\Z{\mathbb{Z}}
\def\F{\mathbb{F}}
\newcommand{\cF}{\mathcal{F}}
\newcommand{\cO}{\mathcal{O}}
\newcommand{\cC}{\mathcal{C}}
\renewcommand{\bar}{\overline}
\title[Every finite abelian group is the group of rational points]{Every finite abelian group is the group of rational points of an ordinary abelian variety over $\F_2$, $\F_3$ and $\F_5$}
\date{\today}
\author{Stefano Marseglia}
\address{Mathematical Institute, Utrecht University, P.O. Box 80010, 3508 TA, Utrecht, The Netherlands}
\email{s.marseglia@uu.nl}
\thanks{Marseglia is supported by NWO grant VI.Veni.202.107.}
\author{Caleb Springer}
\address{Department of Mathematics, University College London, Gower Street, London WC1H 0AY,
UK\\
and The Heilbronn Insitute for Mathematical Research, Bristol, UK}
\email{c.springer@ucl.ac.uk}
\thanks{Springer is partially supported by National Science Foundation award {CNS-2001470} and by the Additional Funding Programme for Mathematical Sciences, delivered by EPSRC (EP/V521917/1) and the Heilbronn Institute for Mathematical Research.}
\keywords{Abelian variety, finite fields, group of rational points}
\subjclass[2020]{Primary: 14K15. Secondary: 14G15, 11G10}
\begin{document}

\begin{abstract}
We show that every finite abelian group occurs as the group of rational points of an ordinary abelian variety over $\F_2$, $\F_3$ and $\F_5$.
We produce partial results for abelian varieties over a general finite field~$\F_q$.
In particular, we show that certain abelian groups cannot occur as groups of rational points of abelian varieties over  $\F_q$ when $q$ is large.
Finally, we show that every finite cyclic group arises as the group of rational points of infinitely many simple abelian varieties over~$\F_2$.
\end{abstract}

\maketitle

\section{Introduction}
    Recently, Howe and Kedlaya \cite{HK21} proved that every positive integer $m$ is the order of the group of rational points of an ordinary abelian variety over~$\F_2$.
    Shortly afterwards, Van Bommel, Costa, Li, Poonen and Smith \cite{vBCLPS21} extended the result to the finite fields $\F_3$ and $\F_5$.
    Similar results, with some exceptions, are obtained for the finite fields $\F_4$, $\F_7$ and,  when $m$ is large enough, for a general finite field $\F_q$.  In another direction, Kedlaya expanded the result of \cite{HK21} to prove that every positive integer is the order of the group of rational points of {\it infinitely many} (not necessarily ordinary) simple abelian varieties over $\F_2$ \cite[Theorem 1.1]{Ked21}.
    The goal of this paper is to strengthen these results from statements regarding cardinality to statements regarding groups.

    \subsection{Finite fields of small cardinality}
    For clarity, we start with a simplified statement of our first main result.     Recall that an abelian variety $A$ of dimension~$g$ over a finite field of characteristic $p$ is called {\it ordinary}, resp.~{\it almost ordinary}, if the $p$-torsion of~$A(\bar{\F}_p)$ consists of $p^g$ points, resp.~$p^{g-1}$.

\begin{mainthm}
\label{thm:main1}
    Let $G$ be a finite abelian group. Then the following statements hold.
\begin{enumerate}
    \item Let $k$ be one of the finite fields $\F_2$, $\F_3$ or $\F_5$. Then there is an ordinary abelian variety $A$ defined over $k$,  such that $A(k) \cong G$.
    \item Over $\F_4$, there is an abelian variety $B$ which is either ordinary or almost ordinary such that $B(\F_4)\cong G$.
\end{enumerate}

\end{mainthm}

    In fact, over $\F_2$, the proof of this result also provides a way to bound the dimension of the abelian variety $A$ appearing in the theorem.
    Moreover, the non-ordinary abelian varieties used over $\F_4$ can be described precisely.
    The version of the theorem including all details appears in Section~\ref{sec:theproof} as Theorem~\ref{thm:main1-full}.

    The outline of the proof of the first part of Main Theorem \ref{thm:main1} is as follows.
    To begin, we reduce to the case when the group $G$ is cyclic, and we focus our attention on isogeny classes with the key property of being {\it square-free}; see Definition \ref{def:square-free}.
    Square-free isogeny classes over prime fields $\F_p$ and square-free ordinary isogeny classes over any finite field~$\F_q$ are well-understood in terms of fractional ideals in \'etale algebras over~$\Q$; see Definition \ref{DelCS} and Proposition~\ref{prop:sqfreeord}.
    Using this description, we prove that in every such isogeny class there is an abelian variety with cyclic group of points; see Proposition~\ref{prop:cyclic}.
    Therefore, because the number of rational points on an abelian variety is an isogeny invariant \cite[Theorem~1.(c)]{Tate66}, we simply require a square-free isogeny class of abelian varieties with the correct number of points.
    The result of Howe and Kedlaya~\cite{HK21} provides the necessary isogeny classes in the case of $\F_2$, and the result of Van Bommel et al.~\cite{vBCLPS21} does the same for the remaining cases.
    See Theorems \ref{thm:HK} and \ref{thm:vBCLPS} for the precise statements of their results.
    This allows us to conclude the proof of the first part of Main Theorem \ref{thm:main1}.
    The second part requires a small modification of the argument since there is no ordinary abelian variety over $\F_4$ with $3$ rational points; see \cite[Remark 1.16]{vBCLPS21}.
    The argument shows that any finite cyclic group is the group of rational points of a square-free ordinary abelian variety over $\F_2$, $\F_3$ and $\F_5$; see Corollary~\ref{cor:Gcyclic}.
    The same is true over $\F_4$ and $\F_7$, with some exceptions; see again Corollary~\ref{cor:Gcyclic} and Corollary~\ref{cor:GcyclicF7}.

    \subsection{Finite fields of arbitrary cardinality}
    We pause to stress that Propositions~\ref{prop:sqfreeord} and~\ref{prop:cyclic} work over any finite field~$\F_q$.
    Since \cite{vBCLPS21} shows that, for any prime power $q$, every sufficiently large integer is the order of an abelian variety defined over $\F_q$,
this gives us the following result.
    \begin{mainthm} \label{thm:main2}
    Let $q$ be a prime power. If $m_1,\dots, m_r$ are integers satisfying $m_i \geq q^{3\sqrt{q} \log q}$ for all $i$, then the group $\Z/m_1\Z~\times\dots\times~\Z/m_r\Z$ is isomorphic to the group of rational points of an ordinary abelian variety over $\F_q$.
    \end{mainthm}

    Main Theorem \ref{thm:main2} is recalled in Section \ref{sec:larger} as Theorem \ref{thm:main-larger}, together with further discussion about general finite fields.
    For example, we show that abelian groups of small exponent, regardless of cardinality, never appear as a group of rational points for an abelian variety over $\F_q$ when $q$ is large; see Proposition \ref{prop:bad-groups}.

    \subsection{Infinitely many occurrences}
    We now return to the finite field $\F_2$.
    After proving that every positive integer is the order of the group of rational points of infinitely many simple abelian varieties over $\F_2$ \cite[Theorem~1.1]{Ked21},
    {Kedlaya} suggested that it would be interesting to show an analogous statement regarding groups.
    The results contained in Section \ref{sec:sqfree} allow us to immediately deduce that every finite cyclic group is the group of rational points of infinitely many simple abelian varieties over $\F_2$; see Proposition \ref{prop:infcyclic}.
    Using this, we can prove the following stronger statement, which will be recalled in Section \ref{sec:infinite} as Theorem \ref{thm:infmany}.

    \begin{mainthm}\label{thm:main3}
        For every $n\geq 1$, there is an infinite set of Weil $2$-polynomials $\{f_{n,j}(t) \}_{j\geq 1}$ which are pairwise coprime and enjoy the following property. For each $j$, every finite abelian group of cardinality~$n$ arises as the group of rational points of an abelian variety with Weil polynomial~$f_{n,j}(t)$.
    \end{mainthm}
    However, as noted by Kedlaya, a result of Kadets \cite{Kadets21} shows that these results are  impossible over $\F_q$ for {larger~$q$}.
    Indeed,  if $q >2$, then for each positive integer $m$ there are only finitely many simple abelian varieties with $m$ rational points.

\subsection{Related work}
We conclude this section by outlining additional relevant literature.
The groups of points of elliptic curves have been studied extensively, in particular in relation to their application to cryptography; see for example~\cite{Ruck87}, \cite[Theorem 3.3.15]{TVN07}, and~\cite{Vol88}.
The groups of points of abelian surfaces have been studied by Xing in~\cite{Xing94} and~\cite{Xing96}, Rybakov in~\cite{Rybakov12}, and by David et al.~in~\cite{Davidetal14}.
Such results were extended to  dimension three by Kotelnikova in \cite{Kotelnikova19}.
Giangreco-Maidana determined precisely when a given Weil polynomial corresponds to a cyclic isogeny class (Definition \ref{def:cyclic}) in \cite{Giangreco-Maidana19, Giangreco-Maidana20,Giangreco-Maidana21}.
Rybakov provided classification results for the group of points in~\cite{Rybakov10} and~\cite{Rybakov15} in terms of the Newton polygon of the characteristic polynomial of Frobenius.
The second author gave a classification in terms of the endomorphism ring of the abelian variety in~\cite{Springer21}.

\subsection*{Acknowledgements}
The authors thank Bjorn Poonen, Valentijn Karemaker and Kiran Kedlaya for their helpful comments.

\section{The square-free case}\label{sec:sqfree}

\begin{df}
\label{def:square-free}
    An isogeny class of abelian varieties over $\F_q$ is called {\it square-free} if the corresponding characteristic polynomial of Frobenius (also known as its \emph{Weil polynomial}) has no multiple complex roots.
    An abelian variety $A$ over $\F_q$ is called {\it square-free} if it belongs to a square-free isogeny class.
\end{df}

\begin{df}\label{DelCS}
    Let $A$ be an abelian variety over a finite field $\F_q$.
    We say that $A$ satisfies condition {\bf Ord} if it is ordinary.
     We say that $A$ satisfies condition {\bf CS} if 
     $q$ is a prime number and the Weil polynomial has no real roots; such abelian varieties were studied by Centeleghe and Stix in \cite{CentelegheStix15}.
\end{df}

Note that in \cite{HK21}, an isogeny class is called square-free if the isogeny decomposition has no repeated factors. Their definition differs from ours in general, so we provide the following lemma to disambiguate the use of terminology.  For example,  we do not call an elliptic curve over $\F_{p^2}$ with Weil polynomial $(x - p)^2$ square-free in this paper even though it is simple.

\begin{lemma}\label{lem:sqfreeequiv}
Let $A$ be an abelian variety over $\F_q$. The following are equivalent.
\begin{enumerate}[(1)]
		\item $A$ has square-free Weil polynomial, i.e. $A$ is square-free.
		\item The endomorphism algebra $\End_{\F_q}(A)\otimes \Q$ is commutative.
\end{enumerate}
Additionally, these conditions imply the following.
\begin{enumerate}[(1)]
		\item[(3)]  The isogeny decomposition of $A$ contains no repeated factors.
\end{enumerate}
Moreover, if $A$ satisfies {\bf Ord} or {\bf CS}, then all three conditions are equivalent.
In particular, if $A$ satisfies {\bf Ord} or {\bf CS}, then $A$ is simple if and only if its Weil polynomial is irreducible.
 \end{lemma}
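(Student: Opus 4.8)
The plan is to pass to the isogeny decomposition and turn all three conditions into numerical data. Write $A \sim \prod_{i=1}^{r} B_i^{e_i}$ with the $B_i$ pairwise non-isogenous simple abelian varieties over $\F_q$, and set $\End^0(A) := \End_{\F_q}(A)\otimes\Q$. By Tate's theorem \cite{Tate66} we have $\End^0(A) \cong \prod_i M_{e_i}(D_i)$, where $D_i := \End^0(B_i)$ is a division algebra with center $K_i := \Q(\pi_{B_i})$, $\pi_{B_i}$ denoting the Frobenius of $B_i$; put $\epsilon_i := \sqrt{[D_i:K_i]} \in \Z_{\geq 1}$. By Honda--Tate the Weil polynomial of $B_i$ equals $m_i^{\epsilon_i}$, where $m_i \in \Z[t]$ is the minimal polynomial of $\pi_{B_i}$, is irreducible, and the $m_i$ are pairwise distinct (simple abelian varieties are non-isogenous iff their minimal Weil polynomials differ). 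Hence the Weil polynomial of $A$ is $f_A = \prod_i m_i^{e_i\epsilon_i}$, a product of powers of distinct irreducibles.

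Granting this dictionary, the equivalence of (1) and (2) is formal. Condition (1) holds iff $f_A$ has no repeated root iff $e_i\epsilon_i = 1$ for every $i$ iff $e_i = \epsilon_i = 1$ for every $i$; condition (2) holds iff every $M_{e_i}(D_i)$ is commutative iff $e_i = 1$ and $D_i$ is a field for every $i$, i.e.\ again iff $e_i = \epsilon_i = 1$ for every $i$. Condition (3) is the assertion that $e_i = 1$ for every $i$, which is visibly implied by (1) and (2) but is not equivalent to them in general: the simple elliptic curve over $\F_{p^2}$ with Weil polynomial $(t-p)^2$ discussed above has $e_1 = 1$ but $\epsilon_1 = 2$.

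It remains to show that if $A$ satisfies \textbf{Ord} or \textbf{CS} then each simple factor $B_i$ has $\epsilon_i = 1$; granting this, (3) forces $e_i = \epsilon_i = 1$ for all $i$, which is (1). For this I would invoke the description, due to Tate and Waterhouse, of the local invariants of $D_i$: $\mathrm{inv}_v(D_i) = 0$ at finite $v \nmid p$, $\mathrm{inv}_v(D_i) = \tfrac12$ at real $v$, and $\mathrm{inv}_v(D_i) \equiv \tfrac{v(\pi_{B_i})}{v(q)}[K_{i,v}:\Q_p] \pmod 1$ at $v\mid p$ (valuations normalized so that $v(K_{i,v}^\times) = \Z$), with $\epsilon_i$ the least common multiple of the denominators of these invariants. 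If $A$ is ordinary then so is each $B_i$, hence $v(\pi_{B_i}) \in \{0, v(q)\}$ for all $v \mid p$ and the $p$-adic invariants vanish; moreover an ordinary Frobenius is not real, so the archimedean invariants vanish too, giving $\epsilon_i = 1$. If $A$ satisfies \textbf{CS} then $q = p$ is prime, so for $v \mid p$ one has $v(q) = v(p) = e(v\mid p)$ and $\tfrac{v(\pi_{B_i})}{v(q)}[K_{i,v}:\Q_p] = v(\pi_{B_i})\,f(v\mid p) \in \Z$, killing the $p$-adic invariants; and since $f_A$, hence $m_i^{\epsilon_i}$, has no real root, $\pi_{B_i}$ is not real and the archimedean invariants vanish, so again $\epsilon_i = 1$. (For the ordinary case one may instead simply cite that a simple ordinary abelian variety has commutative endomorphism algebra $\Q(\pi)$.)

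Finally, the ``in particular'' clause: if $f_A = \prod_i m_i^{e_i\epsilon_i}$ is irreducible there can be only one factor and $e_1\epsilon_1 = 1$, so $A \sim B_1$ is simple --- this direction needs no hypothesis. Conversely if $A$ is simple then $A \sim B_1$ with $e_1 = 1$, and under \textbf{Ord} or \textbf{CS} the previous paragraph gives $\epsilon_1 = 1$, whence $f_A = m_1$ is irreducible. The one step requiring genuine care --- and the only place where \textbf{Ord} or \textbf{CS}, rather than square-freeness alone, is used --- is the vanishing of the local invariants of $D_i$; everything else is bookkeeping around Honda--Tate.
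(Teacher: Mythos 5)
Your proposal is correct, and its skeleton coincides with the paper's: both rest on Tate's description of $\End_{\F_q}(A)\otimes\Q$ and reduce the whole lemma to showing that, under \textbf{Ord} or \textbf{CS}, each simple isogeny factor $B_i$ has commutative endomorphism algebra, i.e.\ $\epsilon_i=1$ in your notation. The difference is one of granularity. The paper's proof is essentially a sequence of citations: \cite[Theorem 2(c)]{Tate66} for $(1)\Leftrightarrow(2)$, \cite[Theorem 1(b)]{Tate66} for $(1)\Rightarrow(3)$, \cite[Theorem 3.3]{Howe95} for the ordinary case, and \cite[Theorem~6.1]{Wat69} or \cite[p.96]{Tate71} for the \textbf{CS} case. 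You instead make the dictionary $f_A=\prod_i m_i^{e_i\epsilon_i}$ explicit and then handle \textbf{Ord} and \textbf{CS} by a single uniform computation of the local invariants of $D_i$ (the $p$-adic invariants vanish because the slopes are integral multiples of $v(q)$ in the ordinary case and because $v(q)=v(p)=e(v\mid p)$ when $q=p$; the archimedean ones vanish because $K_i$ is totally imaginary once real roots are excluded). This buys a self-contained and unified treatment of the two conditions, and it makes transparent exactly where each hypothesis is used --- in particular it cleanly isolates why $(3)\not\Rightarrow(1)$ in general, via the elliptic curve over $\F_{p^2}$ with Weil polynomial $(x-p)^2$, which is the same example the paper flags before the lemma. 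The only caveat is that your argument quietly uses that the order of $D_i$ in the Brauer group equals its index (Albert--Brauer--Hasse--Noether) and that an ordinary Frobenius cannot be real (else $\pi=\pm\sqrt{q}$ and all slopes are $1/2$); both are standard and both are implicit in the references the paper cites, so nothing is missing.
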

  \begin{proof}
  The first equivalence is \cite[Theorem 2(c)]{Tate66}.
  The implication that the first two conditions imply the third follows from \cite[Theorem 1(b)]{Tate66}.
   For property {\bf Ord}, it follows from Honda-Tate theory that an ordinary isogeny class is simple if and only if its Weil polynomial is irreducible; see for example~\cite[Theorem 3.3]{Howe95}. For property {\bf CS}, note that the Weil polynomial of any simple abelian variety over a prime finite field $\F_p$ is irreducible, unless the Weil polynomial has a real root; see for example \cite[Theorem~6.1]{Wat69} or \cite[p.96]{Tate71}.
  \end{proof}
\begin{remark}\label{rmk:simpleCS}
    Over a prime field $\F_p$, there is only one simple isogeny class whose Weil polynomial has real roots, namely $(x^2-p)^2$.
\end{remark}

Square-free abelian varieties satisfying {\bf Ord} or {\bf CS} are well-understood in terms of fractional ideals, thanks to certain equivalences of categories proved by Deligne in \cite{Del69} and Centeleghe-Stix in \cite{CentelegheStix15}.
We summarize the relevant results in the following Proposition; see \cite[Cor.~4.4 and Cor.~4.7]{MarAbVar18} for the proofs.
Let $f_A$ be the characteristic polynomial of Frobenius for a square-free abelian variety $A$ satisfying {\bf Ord} or {\bf CS}.
Let~$K$ be the \'etale algebra generated by the Frobenius endomorphism $\pi$, that is,~$K=\Q[\pi]=\Q[x]/(f_A)$.
Denote by $\Z[\pi,\bar{\pi}]$ the order in $K$ generated by the Frobenius and Verschiebung endomorphisms.
\begin{prop}
\label{prop:sqfreeord}
There is an equivalence\footnote{In the {\bf CS} case, the functor is contravariant. This detail is not needed in this paper.}
 $\cF$ between
the category of abelian varieties isogenous to~$A$ (with $\F_q$-homomorphisms) and the category of fractional $\Z[\pi,\bar{\pi}]$-ideals in $K$ (with $\Z[\pi,\bar{\pi}]$-linear morphisms).
In particular, if~$\cF(B) = I$ then we have an isomorphism of finite abelian groups
\[ B(\F_q) \cong \frac{I}{(\pi -1 ) I}. \]
\end{prop}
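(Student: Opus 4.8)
The plan is to deduce the statement from two equivalences of categories — Deligne's equivalence \cite{Del69} in the \textbf{Ord} case and the Centeleghe--Stix equivalence \cite{CentelegheStix15} in the \textbf{CS} case — and then to read off the point-count formula by a snake-lemma computation carried out one prime at a time. First I would recall Deligne's theorem in the form needed: the category of ordinary abelian varieties over $\F_q$ is equivalent to the category of pairs $(T,F)$, where $T$ is a free $\Z$-module of finite rank and $F\in\End(T)$ is an endomorphism admitting a Verschiebung $V\in\End(T)$ with $FV=q$ and satisfying the usual Weil and ordinariness conditions. Restricting to an abelian variety $B$ isogenous to the square-free $A$: the algebra $K=\Q[\pi]$ acts faithfully on $T\otimes\Q$, and since $f_A$ is square-free, $K$ is a product of number fields of $\Q$-dimension $2g$, equal to the $\Z$-rank of $T$; a dimension count then forces $T\otimes\Q$ to be free of rank one over $K$, with $F$ acting as $\pi$ and $V$ as $\bar\pi$. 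Hence the datum $(T,F)$ is exactly a finitely generated $\Z[\pi,\bar\pi]$-submodule $T\subset K$ spanning $K$ over $\Q$, i.e.\ a fractional $\Z[\pi,\bar\pi]$-ideal, and morphisms of pairs become $\Z[\pi,\bar\pi]$-linear maps. In the \textbf{CS} case ($q=p$, with $f_A$ having no real roots) the analogous statement holds for the contravariant Centeleghe--Stix functor, again identifying the isogeny class with the category of fractional $\Z[\pi,\bar\pi]$-ideals in $K$. This produces the equivalence $\cF$.

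Next I would compute the group of rational points. Write $\cF(B)=I$ and use $B(\F_q)=\ker\bigl(\mathrm{Frob}-1\colon B(\bar{\F}_q)\to B(\bar{\F}_q)\bigr)$. Every root $\alpha$ of $f_A$ has $|\alpha|=\sqrt q>1$, so $f_A(1)\neq0$ and $\pi-1\in K^\times$; thus $\mathrm{Frob}-1$ acts on $B$ as the isogeny ``multiplication by $\pi-1$'', whose kernel has order $|N_{K/\Q}(\pi-1)|=\#B(\F_q)$, which equals $\#\bigl(I/(\pi-1)I\bigr)$ as well. To upgrade this equality of orders to an isomorphism of groups I would argue prime by prime. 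For $\ell\neq p$, the equivalence is compatible with $\ell$-adic Tate modules, so $B[\ell^\infty](\bar{\F}_q)\cong I\otimes_\Z(\Q_\ell/\Z_\ell)$ as $\mathrm{Frob}$-modules with $\mathrm{Frob}$ acting as $\pi$; since $\pi-1$ is invertible on $I\otimes\Q_\ell$, the snake lemma applied to $0\to I\otimes\Z_\ell\to I\otimes\Q_\ell\to I\otimes(\Q_\ell/\Z_\ell)\to0$ identifies the kernel of $\pi-1$ on $B[\ell^\infty](\bar{\F}_q)$ with $\bigl(I/(\pi-1)I\bigr)\otimes\Z_\ell$. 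For $\ell=p$, the group $B[p^\infty](\bar{\F}_q)$ of \'etale $p$-power torsion points is identified with $I^{\mathrm u}_p\otimes_{\Z_p}(\Q_p/\Z_p)$, where $I^{\mathrm u}_p$ is the unit-root part of $I\otimes\Z_p$ — the part supported at the places $v\mid p$ with $v(\pi)=0$ — again with $\mathrm{Frob}$ acting as $\pi$; meanwhile at the other places above $p$ one has $v(\pi)>0$, hence $v(\pi-1)=0$, so $\pi-1$ is a unit there and contributes nothing to $I/(\pi-1)I$. The same snake-lemma computation then handles the $p$-part, and assembling all primes yields $B(\F_q)\cong I/(\pi-1)I$.

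I expect the prime $p$ in this last step to be the main obstacle: it requires the precise construction of Deligne's (respectively Centeleghe--Stix's) functor in order to match $B[p^\infty](\bar{\F}_q)$ with the correct localization of $I$, and in the contravariant \textbf{CS} situation one must also track whether Frobenius corresponds to multiplication by $\pi$ or by $\bar\pi$ — the functional equation $f_A(1)=f_A(q)/q^g$ then shows that the two choices give the same group. The part away from $p$ and the cardinality bookkeeping are routine once the equivalence is in hand; all of these details are worked out in \cite[Cor.~4.4 and Cor.~4.7]{MarAbVar18}, which the paper cites directly.
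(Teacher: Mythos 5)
Your proposal is correct and follows essentially the same route as the paper, which offers no argument beyond deferring to the Deligne and Centeleghe--Stix equivalences as packaged in \cite[Cor.~4.4 and Cor.~4.7]{MarAbVar18}; your sketch of how those corollaries are proved (rank-one freeness of $T\otimes\Q$ over $K$ from square-freeness, then the prime-by-prime snake-lemma computation of $\ker(\mathrm{Frob}-1)$) is sound. The only point worth tightening is that in the contravariant {\bf CS} case the functional equation gives equality of orders rather than an isomorphism of groups, but since a finite abelian group is abstractly isomorphic to its Pontryagin dual this causes no harm.
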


   Before applying this proposition and concluding this section, we recall one more definition.

    \begin{df}
    \label{def:cyclic}
    An abelian variety $A$ over $\F_q$ is \emph{cyclic} if $A(\F_q)$ is a cyclic group.  An isogeny class is cyclic if every abelian variety in the isogeny class is cyclic.
     \end{df}

Using Proposition \ref{prop:sqfreeord}, we can prove the existence of cyclic abelian varieties within every isogeny class satisfying {\bf Ord} or {\bf CS}.  The following proposition can be viewed as a generalization of a result of Galbraith for elliptic curves \cite[Lemma 1]{Galbraith99}.

\begin{prop}
\label{prop:cyclic}
    Every square-free isogeny class over $\F_q$ satisfying {\bf Ord} or {\bf CS} contains a cyclic abelian variety
\end{prop}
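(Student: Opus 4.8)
The plan is to use the equivalence of categories from Proposition~\ref{prop:sqfreeord} to translate the problem into commutative algebra, and then to produce an explicit fractional $\Z[\pi,\bar\pi]$-ideal $I$ in the \'etale algebra $K$ for which the finite group $I/(\pi-1)I$ is cyclic. First I would fix notation: write $R = \Z[\pi,\bar\pi]$, write $\cO_K$ for the maximal order of $K$, and recall that since the isogeny class is square-free, $K = \Q[x]/(f_A)$ is a product of distinct number fields, $K \cong \prod_{i=1}^r K_i$. I would like to take $I = \cO_K$, viewed as a fractional $R$-ideal; this is legitimate because $R \subseteq \cO_K$ are both orders in $K$. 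The key reduction is then: the group $\cO_K/(\pi-1)\cO_K$ decomposes as a product $\prod_i \cO_{K_i}/(\pi_i - 1)\cO_{K_i}$ over the factors, where $\pi_i$ is the image of $\pi$ in $K_i$, and each factor $\cO_{K_i}/(\pi_i-1)\cO_{K_i}$ is a finite quotient of a Dedekind domain, hence (by the structure theory / CRT for Dedekind domains) is cyclic, being of the form $\cO_{K_i}/\mathfrak{a}_i$ for the ideal $\mathfrak{a}_i = (\pi_i - 1)$. Wait — a quotient $\cO_{K_i}/\mathfrak{a}_i$ need not itself be cyclic as an abelian group in general, so this needs a little more care.

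So the heart of the argument is showing that $\cO_{K_i}/(\pi_i - 1)$ is a cyclic group, or more precisely finding \emph{some} fractional ideal in each factor whose quotient by $\pi_i - 1$ is cyclic, and then patching. Here is where I expect the main obstacle to lie, and the natural tool is weak approximation / the Chinese Remainder Theorem at the level of ideals. The idea: given the principal ideal $(\pi_i - 1) \subseteq \cO_{K_i}$, factor it into prime ideals $\prod_{\mathfrak p} \mathfrak p^{e_{\mathfrak p}}$; the quotient $\cO_{K_i}/(\pi_i-1) \cong \prod_{\mathfrak p} \cO_{K_i}/\mathfrak p^{e_{\mathfrak p}}$ is cyclic as a group precisely when no rational prime $\ell$ divides two of the distinct primes $\mathfrak p$ appearing, i.e.\ when the primes above each $\ell$ are "spread out". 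If that fails, I would replace $\cO_{K_i}$ by a cleverly chosen fractional ideal $J_i$: multiplying by $\mathfrak p^{-1}$ for offending primes removes a factor of $\ell$ from one local piece, and one can iterate to arrange that for each prime $\ell$ at most one local factor contributes $\ell$-torsion. Concretely, I would use the fact proved in the literature on cyclic isogeny classes (the Giangreco-Maidana and Rybakov references, which should be cited) together with the freedom to scale by fractional ideals; alternatively, one can phrase this as: among all fractional $R$-ideals $I$, the group $I/(\pi-1)I$ has minimal possible "$\ell$-rank" for a suitable choice, and that minimum is $1$ for each $\ell$.

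A cleaner route, which I would try first, is the following direct approach over $\cO_K$ using that $\pi\bar\pi = q$. For each rational prime $\ell$, I want to bound the $\ell$-rank of $\cO_K/(\pi-1)\cO_K$. If $\ell \nmid q$, then $\pi$ acts invertibly modulo $\ell$ in a way that... hmm, this doesn't immediately give rank $1$ either. So I think the robust strategy is: reduce to one number field $K_i$ at a time (legitimate because a product of cyclic groups of pairwise coprime order is cyclic, and one arranges coprimality by the scaling trick), and within $K_i$ invoke that $\cO_{K_i}/(\pi_i - 1)\cO_{K_i}$ together with suitable ideal-scaling yields a cyclic group — this is essentially Galbraith's lemma \cite[Lemma 1]{Galbraith99} for elliptic curves, generalized, and the generalization is exactly what Proposition~\ref{prop:cyclic} asserts. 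The main work, and the step I'd flag as the crux, is the combinatorial bookkeeping showing that the fractional-ideal scaling can simultaneously make each local $\ell$-part cyclic while keeping the parts for different $\ell$ coprime; I would organize this by choosing $I = \prod_{\mathfrak p} \mathfrak p^{a_{\mathfrak p}}$ with exponents $a_{\mathfrak p}$ determined prime-by-prime, and verify that the resulting $I/(\pi-1)I \cong \prod_{\mathfrak p}\cO_{K}/\mathfrak p^{v_{\mathfrak p}(\pi-1) + \text{(correction)}}$ has all local pieces with pairwise coprime orders across the rational primes they sit above. Once this is done for a single isogeny class, Proposition~\ref{prop:sqfreeord} immediately produces the abelian variety $B$ with $\cF(B) = I$ and $B(\F_q) \cong I/(\pi-1)I$ cyclic, completing the proof.
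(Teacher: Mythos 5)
Your overall framing (use Proposition~\ref{prop:sqfreeord} to reduce to exhibiting a fractional $\Z[\pi,\bar\pi]$-ideal $I$ with $I/(\pi-1)I$ cyclic) is correct, but the specific route cannot be completed, and the step you flag as the crux is exactly where it breaks. You propose to work with invertible ideals of the maximal order, $I=\prod_{\mathfrak{p}}\mathfrak{p}^{a_{\mathfrak{p}}}\subseteq \cO_K$, and to tune the exponents $a_{\mathfrak{p}}$ so as to ``remove'' offending $\ell$-torsion from individual local factors. This scaling has no effect: for an invertible fractional ideal $I$ of a Dedekind domain (or a finite product of Dedekind domains) and any nonzerodivisor $a$, one has $I/aI\cong \cO_K/a\cO_K$ as $\cO_K$-modules, because $I$ is locally principal at every prime dividing $(a)$. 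Hence every choice of exponents yields a group isomorphic to $\cO_K/(\pi-1)\cO_K$, which genuinely fails to be cyclic in many isogeny classes (already for elliptic curves with full rational $\ell$-torsion and maximal endomorphism ring). The flexibility you need lives entirely in the \emph{non-invertible} fractional $\Z[\pi,\bar\pi]$-ideals, i.e., in varying the multiplicator ring (equivalently, the endomorphism ring of the abelian variety), which your argument never exploits. The appeal to the Giangreco-Maidana and Rybakov literature at the crux, together with the remark that ``the generalization is exactly what Proposition~\ref{prop:cyclic} asserts,'' is circular rather than a proof.

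The fix is to go in the opposite direction from the maximal order: take $I=\Z[\pi,\bar\pi]$ itself, the minimal order. The paper shows that $\Z[\pi,\bar\pi]/(\pi-1)$ is cyclic by a short presentation argument: there is a natural surjection $\Z[x,y]/(f_A(x),xy-q,x-1)\to \Z[\pi,\bar\pi]/(\pi-1)$, and the division algorithm $f_A(x)=(x-1)p(x)+f_A(1)$ together with the identity $y-q=(xy-q)-y(x-1)$ shows the source is $\Z/f_A(1)\Z$; since the target has order $f_A(1)=\#A(\F_q)$, the surjection is an isomorphism. This is the correct generalization of Galbraith's lemma (where the minimal order $\Z[\pi]$ plays the same role for elliptic curves), and no prime-by-prime bookkeeping is needed.
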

\begin{proof}
    We fix a square-free isogeny class over $\F_q$  satisfying {\bf Ord} or {\bf CS}.
    By Proposition \ref{prop:sqfreeord} there exists an abelian variety $A$ in such an isogeny class such that $\cF(A) = \Z[\pi, \bar\pi]$.
	Observe that a natural surjective map
	$$
		 \varphi: \frac{\Z[x, y]}{(f_A(x),xy-q, x - 1)} \to \frac{\Z[\pi, \overline\pi]}{(\pi - 1)} 
	$$
	 is induced by mapping $x$  to $\pi$ and $y$ to $\overline\pi$ because $f_A(\pi) = 0$ and $\pi\overline\pi = q$.
	 The target of $\varphi$ is isomorphic to the group of rational points $A(\F_q)$ by Proposition~\ref{prop:sqfreeord}. To conclude the proof, we will show that the domain is a cyclic group of order $f_A(1) = \#A(\F_q)$, which implies that the natural surjective map is actually an isomorphism. We pause to note that the closely-related surjective map $\Z[x, y]/(f_A(x),xy-q) \to \Z[\pi, \overline\pi]$  is \emph{not} an isomorphism in general; see \cite[Theorem 11]{CentelegheStix15} for the correct description of $\Z[\pi, \overline\pi]$ which could be used in an alternate presentation of this proof.

    By the division algorithm we can write $f_A(x) = (x-1)p(x) + f_A(1)$, for some polynomial $p(x)$.
    This relation together with $y-q = (xy-q) - y(x-1)$
    shows that we have the following equality of ideals of $\Z[x,y]$:
    \[ (f_A(x),xy-q,x-1) = (f_A(1),x-1,y-q). \]
    Therefore the evaluation map $(x,y)\mapsto (1,q) $ induces an isomorphism
    \[
        \frac{\Z[x,y]}{(f_A(1),x-1,y-q)}
        \cong \frac{\Z}{(f_A(1))}.
    \]
    We conclude that $\varphi$ is an isomorphism and $A(\F_q)$ is a cyclic group.
\end{proof}
\begin{remark}\label{rem:Rybakov_cyclic}
We note that the same result can be deduced from \cite{Rybakov10}, and
in the simple case, the result can also be deduced from \cite[Theorem 1.3]{Springer21}.
 Our proof shows that we can choose the cyclic ordinary abelian variety $A$ to have endomorphism ring isomorphic to $\Z[\pi, \bar\pi]$. In fact, any abelian variety in the given isogeny class with minimal endomorphism ring is cyclic because $\Z[\pi,\bar\pi]$  is Gorenstein; see \cite[Theorem 11]{CentelegheStix15}.
\end{remark}

\section{Proof of Main Theorem \ref{thm:main1}}
\label{sec:theproof}
We now focus on abelian varieties defined over $\F_2$.
As indicated in the introduction, Howe and Kedlaya proved the following theorem.
\begin{thm}[Theorem 1, \cite{HK21}]
\label{thm:HK}
    Let $m > 0$ and $d > 2$ be integers with $m < (4/3)2^d + 1$.
    Then there is a square-free ordinary abelian variety $A$ over~$\F_2$ of dimension at most $d$ with $m = \#A(\F_2)$.
\end{thm}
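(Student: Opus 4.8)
The plan is to translate the statement into a combinatorial question about totally real polynomials and then settle that question by an explicit recursion on the dimension.

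First I would set up the dictionary. By Honda--Tate theory, ordinary isogeny classes over $\F_2$ correspond to monic \emph{real Weil polynomials}: monic $h\in\Z[x]$ of degree $g$ that are totally real, have all roots in $[-2\sqrt2,2\sqrt2]$, and satisfy $h(0)$ odd; the associated Weil polynomial is $f(x)=x^{g}h(x+2/x)$, and the abelian varieties in the class have dimension $g$. Two elementary facts are the levers. First, $\#A(\F_2)=f(1)=h(3)$. Second, the quadratic $x^{2}-sx+2$ has a repeated root only when $s=\pm2\sqrt2$, which cannot be a root of $h$ when $h(0)$ is odd (the minimal polynomial of $2\sqrt2$ is $x^{2}-8$); hence $f$ is separable exactly when $h$ is, and by Lemma~\ref{lem:sqfreeequiv} this means $A$ is square-free exactly when $h$ has no repeated root (and then the whole isogeny class is). So it suffices to show: for every $m>0$ and every integer $d>2$ with $m<(4/3)2^{d}+1$ there is a monic, totally real, squarefree $h\in\Z[x]$ of degree at most $d$ with all roots in $[-2\sqrt2,2\sqrt2]$, $h(0)$ odd, and $h(3)=m$. (For $m=1$ take $h=1$, of degree $0$.)

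Next I would record two organizing principles. A product of \emph{pairwise coprime} real Weil polynomials is again one and corresponds to the product of the associated abelian varieties; since distinct monic irreducibles are automatically coprime, a construction that multiplies together \emph{distinct} irreducible building blocks is square-free for free, with invariants ``multiply the values $h(3)$, add the degrees''. Accordingly I would assemble a small library of low-degree irreducible blocks --- the linear block $x-1$ (value $2$), irreducible quadratics $x^{2}-ax+b$ with $b$ odd and both roots in $[-2\sqrt2,2\sqrt2]$ (values $9-3a+b$, e.g.\ $x^{2}-3x+1\mapsto1$ and $x^{2}-7\mapsto2$), and a handful of cubics --- and check by a short search that dimension at most $3$ realizes every $m\le 11$, which settles the base case $d=3$ and seeds the induction.

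Then comes the induction on $d$. Assume the statement for $d-1$ and take $m$ with $(4/3)2^{d-1}+1\le m<(4/3)2^{d}+1$. If $m$ admits a nontrivial factorization $m=m_{1}m_{2}$, then $m_{1}\le\sqrt m$ and $m_{2}=m/m_{1}$ are each comfortably below $(4/3)2^{d-1}$, so by induction each is realized by a real Weil polynomial, the two degrees summing to at most $d$; one takes their product, having arranged --- by strengthening the inductive hypothesis to track which irreducible blocks are used, and using that realizable values have distinct realizations within the available dimension budget --- that the two polynomials share no factor. The remaining case is $m$ a prime (or prime power, or any integer with no balanced factorization). Here the product trick is useless: if $h(3)=p$ is prime then, since every irreducible factor $h_i$ of $h$ has $h_i(3)>0$, exactly one factor has value $p$ and the rest value $1$, and no value-$1$ block is linear, so up to dimension-wasting value-$1$ blocks $h$ must itself be irreducible. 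I would therefore need to produce, for every prime power in the relevant range, a \emph{single} irreducible real Weil polynomial of degree at most $d\approx\log_{2}m$ with that value at $3$ and odd constant term --- equivalently, to realize each such prime power as the number of rational points of one low-dimensional ordinary abelian variety. This is the step I expect to be the genuine obstacle: unlike the composite case it is not formal, and it must be carried out with enough precision to stay below the sharp bound $(4/3)2^{d}+1$ (not merely $\#A(\F_2)\lesssim 2^{d}$) and to keep all the factors pairwise coprime so that the final variety is square-free. I would attack it by an explicit construction of totally real polynomials with prescribed value at $3$ --- for instance starting from polynomials with well-separated roots strictly inside $[-2\sqrt2,2\sqrt2]$ and adjusting the low-order coefficients --- together with a counting argument on the number of distinct irreducible real Weil polynomials of a given degree to guarantee the target value is attained by an irreducible one, replacing it if necessary by an irreducible factor padded with value-$1$ blocks and correcting the overall value.
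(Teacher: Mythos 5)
This statement is not proved in the paper at all: it is Theorem~1 of Howe--Kedlaya \cite{HK21}, imported verbatim as an external input (the paper's contribution begins only afterwards, in passing from cardinalities to groups). So your proposal is really an attempt to reprove Howe--Kedlaya from scratch, and as such it has a genuine gap that you yourself flag: the case where $m$ is a prime (or prime power, or any integer without a balanced factorization). Your own analysis shows this case cannot be dodged --- if $h=\prod h_i$ is square-free with $\prod h_i(3)=p$ prime, then exactly one irreducible factor must have value $p$ at $3$ --- so the entire content of the theorem is concentrated in producing, for \emph{every} prime $p<(4/3)2^d+1$, a totally real monic $h_i\in\Z[x]$ of controlled degree with roots in $[-2\sqrt2,2\sqrt2]$, odd constant term, and $h_i(3)=p$. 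Your proposed remedy (``an explicit construction of totally real polynomials with prescribed value at $3$ \dots together with a counting argument'') is a research plan, not a proof; there is no a priori reason such a polynomial of degree roughly $\log_2 p$ exists, and a counting argument cannot establish irreducibility of a polynomial whose existence has not yet been shown. Howe and Kedlaya avoid the prime/composite dichotomy entirely: their argument constructs the real Weil polynomial directly and explicitly from a restricted base-$2$-type representation of $m$ (the constant $4/3=\sum_{j\ge0}4^{-j}$ in the bound is exactly the reach of that representation with $d$ digits), so distinctness and admissibility of the factors come out of the construction rather than being retrofitted.

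Two smaller but real problems. In the composite case, ``each $m_i$ is below $(4/3)2^{d-1}+1$, so by induction each is realized \dots the two degrees summing to at most $d$'' does not follow: the inductive hypothesis at level $d-1$ only bounds each degree by $d-1$, giving a total of $2d-2$. To stay within dimension $d$ you must apply the hypothesis at levels $d_1,d_2$ with $d_1+d_2\le d$ and $m_i<(4/3)2^{d_i}+1$, which requires a genuinely balanced factorization (and runs into the $d_i\ge3$ constraint of your base case); most composites, e.g.\ $m=2p$ or $m=pq$ with $p\ll q$, have no such factorization and fall back into the unresolved prime case. Finally, the coprimality of the two inductively produced polynomials is asserted via an unexecuted ``strengthening of the inductive hypothesis.'' The front-end dictionary (ordinary square-free isogeny classes over $\F_2$ $\leftrightarrow$ squarefree real Weil polynomials $h$ with roots in $[-2\sqrt2,2\sqrt2]$ and $h(0)$ odd, $\#A(\F_2)=h(3)$, multiplicativity over coprime factors) is correct and is indeed the standard setup, but everything after it is where the theorem actually lives.
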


Over $\F_3, \F_4$ and $\F_5$, Van Bommel et al.~proved the following result.
\begin{thm}[{\cite[Theorem 1.13(a), Remarks 1.16 and 1.18]{vBCLPS21}}]
\label{thm:vBCLPS}
    Let $m$ be a positive integer and $k$ be $\F_3,\F_4$ or $\F_5$.
    Then there is a square-free abelian variety $A$ over~$k$ with $m = \#A(k)$.
    One can choose $A$ to be ordinary except in the case $m=3$ and $k=\F_4$.
\end{thm}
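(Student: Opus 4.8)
The plan is to follow the method that Howe and Kedlaya used for $\F_2$ (Theorem~\ref{thm:HK}) and carry it out over $\F_3$, $\F_4$ and $\F_5$. For a prime power $q$ let $S_q \subseteq \Z_{>0}$ be the set of integers occurring as $\#A(\F_q)$ for some square-free ordinary abelian variety $A/\F_q$. The statement to be proved amounts to $S_3 = S_5 = \Z_{>0}$ and $S_4 = \Z_{>0}\setminus\{3\}$, supplemented by a single square-free --- and then necessarily non-ordinary --- isogeny class over $\F_4$ with three rational points. The reason $3$ must be excluded for the ordinary statement over $\F_4$ is a small direct computation: by the Weil bounds together with the ordinarity condition in characteristic $2$, no ordinary isogeny class over $\F_4$ has exactly three points (cf.\ \cite[Remark~1.16]{vBCLPS21}), whereas a square-free supersingular class with three points does exist.

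The engine of the argument is multiplicativity of $S_q$ under coprime gluing: if $m, m' \in S_q$ are realized by square-free ordinary isogeny classes whose Weil polynomials are coprime, then $mm' \in S_q$, realized by the product abelian variety. Indeed, the Weil polynomial of a product is the product of the Weil polynomials, so it is again square-free (by coprimality) and ordinary (a product of ordinary abelian varieties is ordinary), while $\#(A_1 \times A_2)(\F_q) = f_{A_1}(1)\, f_{A_2}(1)$ by \cite[Theorem~1(c)]{Tate66}. One also needs enough freedom in the choice of realizing class --- for instance, being able to avoid a prescribed finite set of primes by controlling a discriminant --- so that coprimality can be imposed; this is where passing to higher dimension helps.

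With this in hand the proof runs by induction on dimension, exactly as for $\F_2$. First one establishes a base range: every $m$ below an explicit bound lies in $S_q$, by exhibiting square-free ordinary Weil $q$-polynomials of small degree with $f(1) = m$ (a finite verification, which also pins down $m = 3$ as the sole exception over $\F_4$ and produces the non-ordinary replacement there). Then, for each dimension $g$, one produces parametrized families of square-free ordinary Weil $q$-polynomials of degree $2g$ whose values at $1$ sweep out a full interval of consecutive integers of length growing with $g$; combining these across $g \leq d$, together with coprime gluing to small already-realized factors to adjust residual parts, one shows that $S_q$ contains every integer up to a bound that grows geometrically in $d$ (the analogue of $m < (4/3)2^d + 1$ over $\F_2$). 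Since every positive integer eventually falls in such a range, $S_q = \Z_{>0}$ up to the $\F_4$ exception.

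The step I expect to be the main obstacle is the interval-filling claim: showing that as $f$ ranges over square-free ordinary Weil $q$-polynomials of a fixed degree, the integers $f(1)$ actually contain an unbroken block of the required length. Ordinarity is a $p$-adic valuation condition on the coefficients and square-freeness is the non-vanishing of a discriminant, and one has to rule out that these conditions puncture the target interval. This is already visible over $\F_4$, where the characteristic-$2$ constraint forces elliptic point counts to be even, so that odd values of $m$ can only be reached in dimension $\geq 2$ and $m = 3$ is missed altogether; over larger base fields the elementary building blocks become sparser and one is pushed into genuinely higher-dimensional families, with correspondingly more delicate bookkeeping. The complete argument, including this interval analysis and the exceptional cases, is carried out in \cite{vBCLPS21}.
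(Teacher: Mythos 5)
This statement is not proved in the paper at all: it is quoted verbatim from \cite{vBCLPS21} (Theorem 1.13(a) together with Remarks 1.16 and 1.18) and used as a black box, so there is no internal proof to compare against. Your sketch is a reasonable reconstruction of the strategy of the cited paper --- coprime gluing of square-free ordinary isogeny classes, base-case verification in low degree, and interval-filling families of Weil $q$-polynomials in growing dimension --- and you correctly flag the interval-filling step as the technical heart. But as written it is a proof outline, not a proof: the existence of the required families, the control needed to impose coprimality, and the explicit bounds are all deferred to \cite{vBCLPS21} itself, so the proposal does not establish the theorem independently of the reference it is meant to justify.

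One point of substance to correct: your claim that the exclusion of $m=3$ over $\F_4$ follows from ``the Weil bounds together with the ordinarity condition'' is only accurate in dimension one, where ordinarity forces $\#E(\F_4)=5-a$ with $a$ odd, hence even. Ruling out $3$ rational points for ordinary abelian varieties over $\F_4$ in \emph{all} dimensions requires the sharper estimates of Kadets \cite[Theorem 3.2]{Kadets21}, as the paper itself notes; the naive Weil interval $[(\sqrt{4}-1)^{2g},(\sqrt{4}+1)^{2g}]$ contains $3$ for every $g\geq 1$ and does not suffice.
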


We are now ready to complete the proof of Main Theorem \ref{thm:main1}.
\begin{thm}
\label{thm:main1-full}
Let
$$
	G = \Z/n_1\Z \times\dots \times \Z/n_r\Z
$$
 be a finite abelian group. The following statements hold.
\begin{itemize}
    \item Let $k$ be one of the finite fields $\F_2,\F_3,\F_5$. Then there is
    an ordinary abelian variety $A$ defined over $k$,  such that $A(k) \cong G$;
    \item There is an abelian variety $B$ over $\F_4$, such that $B(\F_4)\cong G$ and $B$ is either ordinary or of the form $B\cong B'\times E$ where $B'$ is ordinary and $E$ belongs to the unique isogeny class of supersingular elliptic curves over $\F_4$ with $3$ rational points; see the LMFDB label \cite[\href{https://www.lmfdb.org/Variety/Abelian/Fq/1/4/ac}{{1.4.ac}}]{lmfdb}.
    The variety $B$ can be taken to be ordinary unless $G$ is a $3$-group such that $G\cong (\Z/3\Z)^{n_1}\times \prod_{j > 1} (\Z/3^j\Z)^{n_j}$ for $n_1$ odd.
\end{itemize}
 Moreover, if $k=\F_2$ and $d_1,\dots, d_r$ are integers satisfying $n_j < (4/3)2^{d_j} + 1$ and~$d_j\geq 3$ for each $1\leq j\leq r$,
 then there is an ordinary abelian variety $A$ defined over $\F_2$ of dimension at most $d = d_1+\cdots +d_r$ such that~$A(\F_2) \cong G$.
\end{thm}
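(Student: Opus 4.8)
The plan is to realize $G$ as the group of rational points of a product: if $G\cong\Z/n_1\Z\times\cdots\times\Z/n_r\Z$ and each $A_j$ is an abelian variety over $k$ with $A_j(k)\cong\Z/n_j\Z$, then $A:=A_1\times\cdots\times A_r$ has $A(k)\cong G$, and $A$ is ordinary as soon as every $A_j$ is, since the slopes of a product are the union (with multiplicity) of the slopes of the factors. So everything reduces to realizing cyclic groups. For this the key input is that every ordinary square-free isogeny class over $k$ with $m$ rational points contains, by Proposition~\ref{prop:cyclic}, an abelian variety with cyclic group of points, which is then $\Z/m\Z$ since it has order $m$. Over $\F_2$ such an isogeny class exists for all $m\geq 1$ and any $d\geq 3$ with $m<(4/3)2^d+1$, of dimension at most $d$, by Theorem~\ref{thm:HK}; over $\F_3$ and $\F_5$ it exists for all $m$ by Theorem~\ref{thm:vBCLPS}; over $\F_4$ it exists for all $m\neq 3$, again by Theorem~\ref{thm:vBCLPS}. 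This immediately gives the first bullet for $k=\F_2,\F_3,\F_5$: realize each $\Z/n_j\Z$ and take the product. For the final assertion over $\F_2$, feed the prescribed $d_j$ into Theorem~\ref{thm:HK} so that $\dim A_j\leq d_j$, whence $\dim A\leq d_1+\cdots+d_r=d$; for the plain existence statement over $\F_2$, choose any $d_j\geq 3$ with $n_j<(4/3)2^{d_j}+1$, which is possible since $(4/3)2^d+1\to\infty$.

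Over $\F_4$ the same argument works verbatim except when $G$ has a $\Z/3\Z$-summand, since no ordinary abelian variety over $\F_4$ has group $\Z/3\Z$ --- indeed none has three rational points (\cite[Remark~1.16]{vBCLPS21}). To handle this I would add one more building block: \emph{there is an ordinary abelian surface $S$ over $\F_4$ with $S(\F_4)\cong(\Z/3\Z)^2$}. Explicitly, take the isogeny class with Weil polynomial $f(x)=x^4-x^3-3x^2-4x+16$: it is ordinary (the coefficient of $x^2$ is odd) and square-free (irreducible, since its real Weil polynomial $y^2-y-11$ has both roots in $(-4,4)$), so Proposition~\ref{prop:sqfreeord} applies; since $f(x+1)=x^4+3x^3-9x+9$ has a single $3$-adic Newton slope, equal to $1/2$, one gets $3\in(\pi-1)\cO_K$, so the abelian variety $S$ with $\cF(S)=\cO_K$ has $S(\F_4)\cong\cO_K/(\pi-1)\cO_K$, a group of order $f(1)=9$ killed by $3$, that is, $(\Z/3\Z)^2$. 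Now write the $3$-part of $G$ in primary form, $(\Z/3\Z)^{n_1}\times\prod_{j\geq 2}(\Z/3^j\Z)^{n_j}$, with prime-to-$3$ part $G'$. If $n_1$ is even, realize $(\Z/3\Z)^{n_1}$ by $S^{n_1/2}$ and every remaining cyclic factor --- all of order $\neq 3$ --- by the first building block. If $n_1$ is odd but $G$ is not a $3$-group, pick a cyclic factor $\Z/m\Z$ of $G'$, merge one $\Z/3\Z$ with it into $\Z/3m\Z$ (order $\neq 3$), and pair the remaining $n_1-1$ copies of $\Z/3\Z$ into copies of $S$. Either way $B$ is a product of ordinary abelian varieties with $B(\F_4)\cong G$.

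There remains the case $G$ a $3$-group with $n_1$ odd. Here I would realize one $\Z/3\Z$ by the supersingular elliptic curve $E$ over $\F_4$ with $\#E(\F_4)=3$, the remaining $(\Z/3\Z)^{n_1-1}$ by $S^{(n_1-1)/2}$, and each $\Z/3^j\Z$ ($j\geq 2$) by the first building block, obtaining $B\cong B'\times E$ with $B'$ ordinary, as claimed.

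The hard part --- the step I expect to be the main obstacle --- is the converse: that in this last case $B$ cannot be taken ordinary, i.e.\ that a $3$-group $G$ with $n_1$ odd is not $A(\F_4)$ for any ordinary $A$. For such an $A$ one has $A(\F_4)\cong\Z_3^{2g}/(\pi-1)\Z_3^{2g}$, and the idea is to exploit the functional equation $f_A(x)=x^{2g}f_A(q/x)/q^g$ together with $q=4\equiv 1\pmod 3$ --- equivalently, the Weil pairing on the $3$-adic Tate module, which is $\pi$-equivariant because $\mu_3\subseteq\F_4$ --- to endow the $3$-primary group $A(\F_4)$ with enough self-duality to force its number of $\Z/3\Z$-summands to be even. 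For $G=\Z/3\Z$ this is precisely the fact recalled above; carrying the argument out in general, and in particular dealing with the failure of the induced pairing to be perfect when $A$ has no prime-to-$3$ polarization, is where I expect the real work to lie.
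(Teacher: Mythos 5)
Your constructive argument is correct and is essentially the paper's own proof: reduce to cyclic groups via products, combine Theorems~\ref{thm:HK} and~\ref{thm:vBCLPS} with Proposition~\ref{prop:cyclic}, and over $\F_4$ patch the missing $\Z/3\Z$ either by merging it into a coprime cyclic factor or by using the supersingular curve $E$, together with an ordinary abelian surface realizing $(\Z/3\Z)^2$. The only cosmetic difference is your choice of surface: the paper uses the isogeny class $x^4-3x^3+7x^2-12x+16$ (LMFDB 2.4.ad\_h) and verifies $\cO_K/(\pi-1)\cO_K\cong(\Z/3\Z)^2$ directly, whereas you use $x^4-x^3-3x^2-4x+16$ and deduce the same via the $3$-adic Newton polygon of $f(x+1)$; your computation checks out ($f(1)=9$, single slope $1/2$, hence $3\in(\pi-1)\cO_K$), so either works.

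The problem is your final paragraph. The clause ``$B$ can be taken to be ordinary unless $G$ is a $3$-group with $n_1$ odd'' is a one-directional sufficiency statement: it asserts ordinarity is achievable outside the listed exception, and asserts nothing about non-achievability inside it. There is no converse to prove, and the general converse you set out to establish is in fact \emph{false}: the paper's remark following the theorem points to the ordinary isogeny class \cite[\href{https://www.lmfdb.org/Variety/Abelian/Fq/2/4/b_f}{{2.4.b\_f}}]{lmfdb} over $\F_4$ containing an abelian variety with group of points $\Z/3\Z\times\Z/9\Z$, which is a $3$-group with $n_1=1$ odd. The only non-existence fact available (and the only one needed to motivate introducing $E$ at all) is that no ordinary abelian variety over $\F_4$ has exactly $3$ rational points, i.e.\ the single case $G\cong\Z/3\Z$, which is quoted from \cite[Remark 1.16]{vBCLPS21}. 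So your proof is already complete at the end of your third paragraph; the ``hard part'' you anticipate is not part of the statement, and any self-duality argument forcing the number of $\Z/3\Z$-summands to be even would contradict the LMFDB example above.
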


\begin{proof}
    Assume that $k$ is one of the finite fields $\F_2$, $\F_3$, or $\F_5$.
    We can immediately reduce to the case when $r = 1$, that is, when the group~$G$ is cyclic.
    By Theorems \ref{thm:HK} and \ref{thm:vBCLPS},
    there exists a square-free ordinary isogeny class over $k$ with $\vert G \vert$ rational points.
    By Proposition~\ref{prop:cyclic}, we have an abelian variety $A$ within this isogeny class with cyclic group of points, that is, we have $A(k) \cong G$.

    We deal now with the finite field $\F_4$.
    Consider the simple ordinary isogeny class of abelian surfaces over $\F_4$ with $9$ rational points given by the Weil polynomial $x^4-3x^3+7x^2-12x+16$; see the LMFDB label \cite[\href{https://www.lmfdb.org/Variety/Abelian/Fq/2/4/ad_h}{{2.4.ad\_h}}]{lmfdb}.
    Let $K=\Q(\pi)$ be the endomorphism algebra of this isogeny class and let $\cO_K$ be the maximal order of $K$.
    We observe that
    \[ \dfrac{\cO_K}{(\pi-1)\cO_K}\cong \dfrac{\Z}{3\Z}\times \dfrac{\Z}{3\Z}. \]
    Hence by Proposition \ref{prop:sqfreeord}, in this isogeny class there is an abelian surface $B_0$ with group of rational points isomorphic to $(\Z/3\Z)^2$.
    Let $E$ belong to the unique isogeny class of supersingular elliptic curves over $\F_4$ with $3$ rational points; see the LMFDB label \cite[\href{https://www.lmfdb.org/Variety/Abelian/Fq/1/4/ac}{{1.4.ac}}]{lmfdb}.

    Write the group $G$ as a product of cyclic groups
    \[
    	G \cong (\Z/3\Z)^{2e + \delta} \times (\Z/s_1\Z)\times \dots \times (\Z/s_f\Z)
    \]
    where $\delta\in \{0,1\}$, $e\geq 0$ and each $s_j$ is a prime power satisfying either $s_j = 2$ or $s_j  > 3$ for all~$j$.
    Using {Theorem~\ref{thm:vBCLPS}} with the method from above, we can find an ordinary abelian variety $B_1$ over $\F_4$ whose group of rational points is isomorphic to $(\Z/s_1\Z)\times \dots \times (\Z/s_f\Z)$.
    If $G$ is not a $3$-group then there exists an $s_j$ which is not divisible by $3$, say $s_1$.
    Again, the above construction using {Theorem~\ref{thm:vBCLPS}} gives us an ordinary abelian variety $B'_1$ over $\F_4$ whose group of rational points is isomorphic to $(\Z/3s_1\Z)\times \dots \times (\Z/s_f\Z)$.

    With this set up, we now distinguish three cases.
    If $\delta=0$ then we set $B=B_0^{e}\times B_1$.
    If $\delta=1$ and $G$ is not a $3$-group then set $B=B_0^{e}\times B'_1$.
    Finally, if $\delta=1$ and $G$ is a $3$-group then we set $B = E^\delta\times B_0^{e}\times B_1$.
    In all three cases, we have that $B(\F_4)\cong G$ by construction and the Chinese Remainder Theorem.
    In the first two cases, the variety $B$ is ordinary, while in the last one $B$ is almost ordinary.
\end{proof}

\begin{remark}
    We stress that Theorem \ref{thm:main1-full} does not exclude the existence of ordinary abelian varieties over $\F_4$ with group of points isomorphic to $(\Z/3\Z)^{n_1}\times \prod_{j > 1} (\Z/3^j\Z)^{n_j}$ for $n_1$ odd.
    Indeed, the LMFDB \cite{lmfdb} contains ordinary abelian varieties over $\F_4$ with group of points $\Z/3\Z\times \Z/9\Z$;
     for example, see the isogeny class with label
    \cite[\href{https://www.lmfdb.org/Variety/Abelian/Fq/2/4/b_f}{{2.4.b\_f}}]{lmfdb}.
    However, the LMFDB does not currently contain any ordinary abelian varieties over $\F_4$ with a group of rational points isomorphic to $(\Z/3\Z)^{2e+1}$ for $e\geq 1$. Nonexistence when $e = 0$ follows from \cite[Theorem 3.2]{Kadets21}; see \cite[Remark 1.16]{vBCLPS21}.
    Proving more general existence or nonexistence results will require additional understanding of which groups can occur as the group of rational points of an abelian variety in a given isogeny class.
\end{remark}

From the proof of Theorem \ref{thm:main1-full} we immediately deduce the following special case.
\begin{cor}
\label{cor:Gcyclic}
    Let $k$ be one of the finite fields $\F_2$, $\F_3$ or $\F_5$.
    Let $G$ be a finite cyclic abelian group.
    Then there are
    \begin{itemize}
    \item a square-free ordinary abelian variety $A$ over $k$ with $A(k)\cong G$; and
    \item a square-free abelian variety $B$ over $\F_4$ with $B(\F_4)\cong G$, which we can choose to be ordinary if $G\neq \Z/3\Z$.
    \end{itemize}
\end{cor}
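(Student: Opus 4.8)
The plan is to recognise that the corollary is exactly the cyclic ($r=1$) case of Theorem~\ref{thm:main1-full}, so I would isolate that portion of the argument and, in addition, simply record that the abelian varieties produced are square-free. Write $n=\#G$ throughout, and recall that square-freeness, ordinarity, and the number of rational points are all isogeny invariants, so it is enough to exhibit, in each case, an isogeny class of the desired type.

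First I would treat $k\in\{\F_2,\F_3,\F_5\}$, and also $k=\F_4$ when $n\neq 3$, in a uniform way. In these cases Theorem~\ref{thm:HK} (for $k=\F_2$, taking any $d>2$ with $n<(4/3)2^d+1$) and Theorem~\ref{thm:vBCLPS} (for $k\in\{\F_3,\F_4,\F_5\}$, where ordinarity can be arranged since we have excluded the pair $m=3$, $k=\F_4$) produce a square-free \emph{ordinary} isogeny class over $k$ containing an abelian variety with exactly $n$ rational points. Such an isogeny class satisfies condition~{\bf Ord} and is square-free, so Proposition~\ref{prop:cyclic} supplies a cyclic abelian variety $A$ inside it. Then $\#A(k)=n$ and $A(k)$ is cyclic, hence $A(k)\cong \Z/n\Z\cong G$, and $A$ is square-free and ordinary because the isogeny class is. When $k=\F_4$ and $n\neq 3$ this $A$ already serves as the ordinary variety $B$ claimed in the statement.

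It then remains to handle $k=\F_4$ with $n=3$, i.e.\ $G\cong\Z/3\Z$. Here I would take $B=E$, the supersingular elliptic curve over $\F_4$ with three rational points, LMFDB label \cite[\href{https://www.lmfdb.org/Variety/Abelian/Fq/1/4/ac}{1.4.ac}]{lmfdb}. Its Weil polynomial $x^2-2x+4$ has discriminant $-12\neq 0$, so it has distinct complex roots and $E$ is square-free in the sense of Definition~\ref{def:square-free}; and since $\#E(\F_4)=3$ is prime, $E(\F_4)$ is cyclic, so $E(\F_4)\cong\Z/3\Z\cong G$. This $B$ is square-free but not ordinary, which is why the ordinarity claim is stated only for $G\neq\Z/3\Z$.

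The single real obstacle is this last case: Proposition~\ref{prop:cyclic} is unavailable over $\F_4$ when $n=3$ because no {\bf Ord} (nor {\bf CS}) isogeny class over $\F_4$ has exactly three rational points, by Theorem~\ref{thm:vBCLPS} and \cite[Remark~1.16]{vBCLPS21}. What rescues the argument is that an elliptic curve whose number of points is squarefree --- in particular prime --- automatically has cyclic group of rational points, so the supersingular isogeny class with three points can be used directly; verifying that this curve is square-free is immediate from its discriminant, and the only other routine verifications are the point-count bookkeeping and the fact that the isogeny classes furnished by Theorems~\ref{thm:HK} and~\ref{thm:vBCLPS} are ordinary, which is part of what those theorems assert.
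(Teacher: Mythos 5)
Your proposal is correct and follows essentially the same route as the paper, which deduces the corollary directly from the proof of Theorem~\ref{thm:main1-full}: Theorems~\ref{thm:HK} and~\ref{thm:vBCLPS} supply a square-free ordinary isogeny class with $\#G$ points (for all cases except $\F_4$ with $\#G=3$), Proposition~\ref{prop:cyclic} produces a cyclic variety in it, and the exceptional case is handled by the supersingular elliptic curve \cite[\href{https://www.lmfdb.org/Variety/Abelian/Fq/1/4/ac}{{1.4.ac}}]{lmfdb}, whose group of order $3$ is automatically cyclic. Your explicit checks (square-freeness of $x^2-2x+4$, why Proposition~\ref{prop:cyclic} cannot be invoked for that case) are accurate and consistent with the paper's remarks.
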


In a similar fashion, using \cite[Theorem 1.13(a), Remarks 1.17 and 1.18]{vBCLPS21}, we can achieve a result analogous to Corollary \ref{cor:Gcyclic} for abelian varieties over the finite field $\F_7$.
\begin{cor}
\label{cor:GcyclicF7}
    Let $G$ be a finite cyclic abelian group with $\vert G \vert \not\in \{ 2, 8, 14, 16, 17, 73 \}$.
    Then there is a square-free ordinary abelian variety $A$ over $\F_7$ with~$A(\F_7)\cong G$.
\end{cor}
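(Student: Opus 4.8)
The plan is to follow the proof of Corollary~\ref{cor:Gcyclic} essentially line by line, feeding in the $\F_7$ analogue of the cardinality results of Van Bommel et al.\ in place of Theorems~\ref{thm:HK} and~\ref{thm:vBCLPS}. Concretely, I would first quote \cite[Theorem 1.13(a), Remarks 1.17 and 1.18]{vBCLPS21}: for every positive integer $m$ with $m\notin\{2,8,14,16,17,73\}$ there is a square-free ordinary abelian variety over $\F_7$ with exactly $m$ rational points, equivalently a square-free ordinary isogeny class over $\F_7$ whose Weil polynomial $f$ satisfies $f(1)=m$. The word ``square-free'' may be read in either of the senses discussed in Lemma~\ref{lem:sqfreeequiv}, since for ordinary abelian varieties the condition of having no repeated isogeny factors is equivalent to having a square-free Weil polynomial.

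Next I would set $m=\vert G\vert$. Since $G$ is cyclic, $G\cong\Z/m\Z$. By hypothesis $m\notin\{2,8,14,16,17,73\}$, so the cited result supplies a square-free ordinary isogeny class over $\F_7$ with $m$ rational points. Because this isogeny class is square-free and satisfies {\bf Ord}, Proposition~\ref{prop:cyclic} produces an abelian variety $A$ in it with $A(\F_7)$ cyclic. The order of $A(\F_7)$ is an isogeny invariant \cite[Theorem~1.(c)]{Tate66}, hence equals $m$; a cyclic group is determined up to isomorphism by its order, so $A(\F_7)\cong\Z/m\Z\cong G$. By construction $A$ is square-free and ordinary, which completes the argument.

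I do not anticipate any genuine obstacle: the entire content lies in the two inputs, namely the $\F_7$ cardinality theorem of \cite{vBCLPS21} and Proposition~\ref{prop:cyclic}. The one point deserving care is the bookkeeping of the exceptional set: one must check that $2,8,14,16,17,73$ are exactly the cardinalities that fail to be realized by a square-free ordinary abelian variety over $\F_7$ according to \cite[Remarks 1.17 and 1.18]{vBCLPS21}, so that the hypothesis $\vert G\vert\notin\{2,8,14,16,17,73\}$ is the right one. This is a matter of quoting their classification accurately rather than proving anything new, and — as with Corollary~\ref{cor:Gcyclic} — one could append a remark noting that for these exceptional orders $G$ might still arise as the group of rational points of a non-ordinary or non-square-free abelian variety over $\F_7$.
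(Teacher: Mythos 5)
Your proposal is correct and is exactly the argument the paper intends: substitute the $\F_7$ cardinality result of \cite[Theorem 1.13(a), Remarks 1.17 and 1.18]{vBCLPS21} for Theorems~\ref{thm:HK} and~\ref{thm:vBCLPS}, then apply Proposition~\ref{prop:cyclic} to the resulting square-free ordinary isogeny class and use isogeny-invariance of the point count. The paper gives no further detail beyond this, so there is nothing to add.
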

\begin{remark}
Using Proposition \ref{prop:cyclic} or Remark \ref{rem:Rybakov_cyclic}, one can construct square-free abelian varieties over $\F_7$, necessarily non-ordinary, with group of rational points isomorphic to $\Z/8\Z$ and $\Z/73\Z$.
\end{remark}

\section{Proof of Main Theorem \ref{thm:main2}}
\label{sec:larger}

For large $q$, the Weil bounds prohibit the existence of abelian varieties over $\F_q$ with a relatively small number of points; see \cite{Weil48} and \cite{Kadets21}.
However, in \cite{vBCLPS21} it is proven that:
\begin{thm}[{\cite[Theorem 1.13(b), Remarks 1.16 and 1.18]{vBCLPS21}}]
    \label{thm:sufflarge}
    For an arbitrary prime power~$q$, every integer $m \geq q^{3\sqrt{q} \log q}$ arises as $m = \#A(\F_q)$ for some ordinary square-free abelian variety $A$ over $\F_q$.
\end{thm}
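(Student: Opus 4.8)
The statement is an existence result about point counts, so the plan is to pass from geometry to a purely multiplicative problem and then to solve the latter by analytic number theory; this follows the strategy of van Bommel et al. First I would reduce to a factorization problem. By Honda--Tate theory, any finite collection of pairwise non-isogenous simple ordinary isogeny classes over $\F_q$ assembles into a single ordinary isogeny class whose Weil polynomial is the product of the corresponding irreducible factors, which are distinct by Lemma~\ref{lem:sqfreeequiv}; hence the assembled Weil polynomial is square-free, and the assembled class is still ordinary because the Newton polygon of a product concatenates slopes. By \cite[Theorem~1.(c)]{Tate66} the point count of the assembled class is the product of the individual point counts. Therefore it suffices to write $m = \prod_j m_j$ where each $m_j = \#B_j(\F_q) = f_{B_j}(1)$ for a simple ordinary $B_j/\F_q$, with the $B_j$ in pairwise distinct isogeny classes. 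The target thus becomes: realize $m$ as a product of admissible factors drawn from distinct simple ordinary isogeny classes.

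Next I would catalog the admissible factors. Ordinary elliptic curves realize every integer $q+1-a$ with $|a|<2\sqrt q$ and $\gcd(a,p)=1$, filling the interval $((\sqrt q-1)^2,(\sqrt q+1)^2)$, a window around $q$ of multiplicative width only about $1+4/\sqrt q$. Since the product of all of the roughly $4\sqrt q$ distinct such factors is only about $q^{4\sqrt q}$, which is far below the threshold $q^{3\sqrt q\log q}$, elliptic factors alone cannot reach large $m$ while remaining square-free. The remedy is to also use higher-dimensional simple ordinary abelian varieties: those of dimension $g$ contribute factors near $q^{g}$, they occur in many non-isogenous families of each given magnitude, and they enlarge the set of attainable sizes. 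This abundance makes the distinctness (square-free) constraint slack once factors of moderately large dimension are permitted.

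The crux is then a covering statement in multiplicative number theory: every integer $m\geq q^{3\sqrt q\log q}$ is a product of admissible factors. The governing mechanism is that a product of $k$ factors drawn from the Weil window around $q$ ranges over an interval of relative width about $(1+4/\sqrt q)^{k}\approx \exp(4k/\sqrt q)$; once $k$ is of order $\sqrt q$ this width exceeds $q$, so the product-ranges for consecutive values of $k$ overlap and, after one checks that the attainable products are dense within each range, every integer in a long multiplicative window is hit. Sliding and combining such windows, using the higher-dimensional blocks both to fill residual gaps and to diversify isogeny classes, then covers every $m$ above the stated bound. The exponent $3\sqrt q\log q$ is exactly the scale at which the attainable products become dense enough to leave no gap at the level of a single integer.

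The hard part will be making this covering quantitative: one must show the attainable products have no gaps all the way down to consecutive integers, which requires sharp control of integers in short intervals and of the multiplicative semigroup generated by the Weil window, and it is precisely this density analysis that pins down the constant $3\sqrt q\log q$. Two side constraints must be honored throughout. Ordinariness removes a positive proportion of candidate factors, namely those with $p\mid a$, and this loss must be tracked in every count. Square-freeness forces the chosen simple factors into pairwise distinct isogeny classes, which is the structural reason higher-dimensional blocks, available in abundance, are used instead of repeated elliptic factors. Balancing exact coverage against these two constraints is the heart of the argument.
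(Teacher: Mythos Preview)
The paper does not prove this theorem at all: it is quoted verbatim from \cite[Theorem~1.13(b), Remarks~1.16 and~1.18]{vBCLPS21} and used as a black box input to the subsequent arguments. There is therefore nothing in the paper to compare your sketch against; the authors simply cite the result and move on.

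Your proposal is instead an attempt to reconstruct the argument of the cited source. As such it is not wrong in spirit---the proof in \cite{vBCLPS21} does proceed by assembling products of point counts of carefully chosen simple ordinary building blocks and verifying a covering statement---but the details you would need to carry it out (the precise density estimates in short multiplicative intervals, the bookkeeping that guarantees distinct isogeny classes while maintaining ordinariness, and the derivation of the exact threshold $q^{3\sqrt{q}\log q}$) are substantial and are not supplied here. For the purposes of the present paper, no proof is required beyond the citation.
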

Still, Theorem \ref{thm:sufflarge} does not imply that every {abelian group} of sufficiently large order arises as the group of points of an abelian variety over $\F_q$, as shown by the following proposition, which was suggested by Bjorn Poonen.
\begin{prop}
\label{prop:bad-groups}
    Let $m > 1$ be an integer and $q$ a power of a prime $p$. Suppose there exists an abelian variety $A$ over $\F_q$ such that $A(\F_q)$ is a group of exponent $m$. Then  we have the following.
    \begin{enumerate}
        \item Unconditionally, $q \leq (m+1)^2$;
        \item If $m$ is also a power of $p$, then we have $q\leq (\sqrt{m} + 1)^2$.
    \end{enumerate}
\end{prop}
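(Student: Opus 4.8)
The plan is to bound $q$ in terms of the structure of $A(\F_q)$ using the Weil bounds, exploiting the fact that a group of exponent $m$ cannot be too large relative to its rank. Write $g = \dim A$ and let $A(\F_q)$ have invariant factors $d_1 \mid d_2 \mid \cdots \mid d_s$, so that $d_s = m$ and $s \leq 2g$, since $A(\F_q)$ is a quotient of the $2g$-dimensional $\widehat{\Z}$-module (more precisely, $A[\ell^\infty](\bar\F_q)$ has rank at most $2g$ over $\Z_\ell$ for every prime $\ell$, so every elementary abelian subquotient has rank at most $2g$). Therefore
\[
    \# A(\F_q) = \prod_{i=1}^{s} d_i \leq d_s^{\,s} = m^{s} \leq m^{2g}.
\]
On the other hand, the Weil bounds give $\# A(\F_q) = \prod (1 - \alpha_i) \geq (\sqrt{q} - 1)^{2g}$, where the $\alpha_i$ are the Weil numbers; indeed each conjugate pair contributes $|1 - \alpha_i|^2 = 1 - 2\mathrm{Re}(\alpha_i) + q \geq (\sqrt q - 1)^2$. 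Combining, $(\sqrt{q} - 1)^{2g} \leq m^{2g}$, hence $\sqrt{q} - 1 \leq m$, which gives part (1) after squaring: $q \leq (m+1)^2$.

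For part (2), assume $m = p^e$ is a power of the characteristic. Then $A(\F_q)$ is a finite $p$-group, so it is a subquotient of $A[p^\infty](\bar\F_q)$. The key extra input is that the $p$-divisible group has height $2g$ but its étale part — the only part contributing $\F_q$-rational points, since the connected part is infinitesimal — has dimension at most $g$; equivalently, the $p$-rank of $A$ is at most $g$. Thus $A(\F_q)[p]$ has $\F_p$-dimension at most $g$, so the number of invariant factors of the $p$-group $A(\F_q)$ is at most $g$, giving $\#A(\F_q) \leq m^{g}$. Comparing again with the Weil lower bound $(\sqrt q - 1)^{2g} \leq \#A(\F_q) \leq m^g$ yields $(\sqrt q - 1)^2 \leq m$, i.e. $q \leq (\sqrt m + 1)^2$.

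The main obstacle is making the bound on the number of invariant factors fully rigorous and locating the right lower bound for $\#A(\F_q)$. For the former, one must argue carefully that $A(\F_q) = \ker(\mathrm{Frob} - 1 \mid A(\bar\F_q))$, and that $(\mathrm{Frob}-1)$ acting on each Tate module $T_\ell A$ (for $\ell \neq p$) and on the Dieudonné module / $p$-divisible group (for $\ell = p$) controls the minimal number of generators: for $\ell \neq p$ one uses $\mathrm{rank}_{\Z_\ell} T_\ell A = 2g$, while for $\ell = p$ in the power-of-$p$ case one uses that only the maximal étale quotient of $A[p^\infty]$, of height $\leq g$, sees rational points. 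For the lower bound, I would use the standard identity $\#A(\F_q) = \prod_{i=1}^{2g}(1 - \alpha_i) = \chi(\mathrm{Frob} - 1) \geq 0$ together with the estimate $|1-\alpha_i||1-\bar\alpha_i| \geq (\sqrt q - 1)^2$ for each conjugate pair, which needs the edge case $\alpha_i \in \{\pm\sqrt q\}$ (real Weil numbers) checked separately — there $|1 - \alpha_i|^2$ could be as small as $(\sqrt q - 1)^2$, which is exactly the extremal value, so the inequality still holds. Everything else is routine bookkeeping with invariant factors and the inequality $m^s \le m^{2g}$ (resp. $m^g$).
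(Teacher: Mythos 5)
Your proposal is correct and follows essentially the same route as the paper: the Weil lower bound $(\sqrt q-1)^{2g}\leq \#A(\F_q)$ combined with the upper bound $\#A(\F_q)\leq \#A(\bar\F_q)[m]\leq m^{2g}$ (and $\leq m^{g}$ in the $p$-power case, via the $p$-rank being at most $g$). The paper states this in one displayed chain of inequalities; your write-up just supplies the details it leaves implicit.
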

\begin{proof}
    Suppose that $g$ is the dimension of $A$. Then, using the Weil bounds and the structure of torsion subgroups, we get
    $$
    	(\sqrt{q} - 1)^{2g}
    	\leq \#A(\F_q)
    	\leq \#A(\overline\F_q)[m]
    	\leq m^{2g}.
    $$
    Rearranging, we obtain $q \leq (m + 1)^2$.  The  stricter upper bound follows similarly by using the structure of the $p^e$-torsion of an abelian variety in characteristic~$p$.
\end{proof}

\begin{cor}
\label{cor:2r}
    If $q >9$ or $q = 8$, then the group $(\Z/2\Z)^r$ for $r \geq 1$ does not arise as the group of rational points for any abelian variety over $\F_q$.
\end{cor}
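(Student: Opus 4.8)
The plan is to apply Proposition~\ref{prop:bad-groups} directly with $m = 2$, noting that for every $r \geq 1$ the group $(\Z/2\Z)^r$ has exponent exactly $2$ (this is where the hypothesis $r \geq 1$ enters, to rule out the trivial group of exponent $1$). So suppose $A$ is an abelian variety over $\F_q$ with $A(\F_q) \cong (\Z/2\Z)^r$; I would then split into two cases according to the characteristic $p$ of $\F_q$.

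If $p$ is odd, then $m = 2$ is coprime to $p$, so only part~(1) of Proposition~\ref{prop:bad-groups} applies and it yields $q \leq (m+1)^2 = 9$. If instead $p = 2$, then $q$ is a power of $2$ and $m = 2$ is a power of $p$, so part~(2) applies and gives the stronger bound $q \leq (\sqrt{m} + 1)^2 = 3 + 2\sqrt{2} < 6$, which for powers of $2$ forces $q \in \{2, 4\}$. In both cases I would then simply observe that $q = 8$ violates the bound $q < 6$ of the even case, and that any $q > 9$ violates either the bound $q \leq 9$ (when $q$ is odd) or the bound $q < 6$ (when $q$ is even). Combining the two cases shows that $(\Z/2\Z)^r$ can only occur when $q \in \{2,3,4,5,7,9\}$, which is exactly the complement of the hypothesis, completing the proof.

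There is no genuine obstacle here: the corollary is an immediate consequence of Proposition~\ref{prop:bad-groups}. The only points requiring a modicum of care are correctly identifying that $(\Z/2\Z)^r$ has exponent $2$ for $r \geq 1$, and keeping straight which of the two torsion-structure bounds is in force according to whether $2$ is or is not a power of the characteristic, so that the sharper bound is used precisely when $q$ is even.
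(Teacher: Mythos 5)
Your proposal is correct and is exactly the argument the paper intends: the corollary is stated without proof as an immediate consequence of Proposition~\ref{prop:bad-groups}, applying part~(1) with $m=2$ to exclude $q>9$ and part~(2) (since $2$ is a power of the characteristic when $q$ is even) to exclude $q=8$. Your case split and the resulting list $q\in\{2,3,4,5,7,9\}$ of non-excluded prime powers are both accurate.
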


The following theorem, whose proof is exactly like that of Theorem \ref{thm:main1-full}, is the best that can be obtained with our current methods. The nonexistence results above show that restrictions on the sizes of the cyclic factors are necessary.
We observe that there are some finite abelian groups which are not outlawed by Proposition \ref{prop:bad-groups} but which are also not realized in Theorem \ref{thm:main-larger}.  For general $q$, determining which of these groups arise as the group of rational points of an ordinary abelian variety over $\F_q$ remains an open question for future research.

\begin{thm}
\label{thm:main-larger}
Let $q$ be a prime power.
If $m_1,\dots, m_r$ are
integers satisfying $m_i \geq q^{3\sqrt{q} \log q}$ for all $i$,
then the group $\Z/m_1\Z~\times\dots\times~\Z/m_r\Z$ is isomorphic to the group of rational points of an ordinary abelian variety over $\F_q$.
\end{thm}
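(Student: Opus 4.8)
The plan is to imitate the proof of Theorem~\ref{thm:main1-full} essentially verbatim, with Theorem~\ref{thm:sufflarge} playing the role that Theorems~\ref{thm:HK} and~\ref{thm:vBCLPS} played there. First I would note that it suffices to treat each cyclic factor separately: given integers $m_1,\dots,m_r$ with $m_i \geq q^{3\sqrt q \log q}$ for all $i$, I will produce an ordinary abelian variety $A_i$ over $\F_q$ with $A_i(\F_q) \cong \Z/m_i\Z$, and then set $A = A_1 \times \cdots \times A_r$, so that $A(\F_q) \cong \prod_i A_i(\F_q) \cong \Z/m_1\Z \times \cdots \times \Z/m_r\Z$. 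A product of ordinary abelian varieties is ordinary (the $p$-rank is additive under products, and $\dim$ is too), so $A$ is ordinary as required.

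It then remains to handle a single cyclic group $\Z/m\Z$ with $m \geq q^{3\sqrt q \log q}$. Here I would apply Theorem~\ref{thm:sufflarge} directly: there is an ordinary square-free abelian variety over $\F_q$ with exactly $m$ rational points, hence an ordinary square-free isogeny class over $\F_q$ whose members all have $m$ rational points (the point count being an isogeny invariant by \cite[Theorem~1.(c)]{Tate66}). Since this isogeny class is square-free and ordinary, it satisfies condition \textbf{Ord}, so Proposition~\ref{prop:cyclic} applies and furnishes an abelian variety $A$ in the isogeny class with $A(\F_q)$ cyclic. As $\#A(\F_q) = m$, we get $A(\F_q) \cong \Z/m\Z$, completing the argument for a single factor and hence the theorem.

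I do not expect any genuine obstacle: the statement is a formal consequence of the two black boxes (Theorem~\ref{thm:sufflarge} for existence of square-free ordinary isogeny classes of the right size, and Proposition~\ref{prop:cyclic} for cyclicity within such a class), combined with the trivial observation that direct products of ordinary abelian varieties are ordinary and that the group of rational points of a product is the product of the groups. The only point requiring a word of care is the reduction to the cyclic case, namely checking that the product construction genuinely yields an \emph{ordinary} variety and the correct group; both are immediate. This is why the excerpt remarks that the proof is ``exactly like that of Theorem~\ref{thm:main1-full}.''
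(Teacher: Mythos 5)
Your proposal is correct and matches the paper's intended argument exactly: the paper states that the proof is ``exactly like that of Theorem~\ref{thm:main1-full}'', i.e.\ reduce to a single cyclic factor, invoke Theorem~\ref{thm:sufflarge} to obtain a square-free ordinary isogeny class of order $m_i$, and apply Proposition~\ref{prop:cyclic} to find a cyclic member, then take the product. No gaps.
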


In the previous section, we deduced that certain cyclic groups can be realized as the group of rational points of a square-free abelian variety over $\F_q$ for various small values of~$q$; see Corollaries~\ref{cor:Gcyclic} and~\ref{cor:GcyclicF7}.
For arbitrary $q$, we prove an analogous corollary concerning square-free abelian varieties with an explicit bound for the size of the group.
\begin{cor}
    \label{cor:cyclic_precise_bound}
    Fix a prime power $q$ and let $m$ be an integer satisfying $m \geq q^{3\sqrt{q} \log q}$.
    Then there is a square-free ordinary abelian variety over $\F_q$ whose group of rational points is isomorphic to~$\Z/m \Z$.
\end{cor}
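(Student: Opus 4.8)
The plan is to observe that this corollary is the ``arbitrary $q$'' analogue of Corollary~\ref{cor:Gcyclic}: it follows by feeding the point-count result of Theorem~\ref{thm:sufflarge} into Proposition~\ref{prop:cyclic}, using that the cardinality of the group of rational points is an isogeny invariant \cite[Theorem~1.(c)]{Tate66}.

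Concretely, I would argue as follows. Given $m \geq q^{3\sqrt{q}\log q}$, apply Theorem~\ref{thm:sufflarge} to obtain a square-free ordinary abelian variety $A$ over $\F_q$ with $\#A(\F_q) = m$. Its isogeny class is then square-free and satisfies condition {\bf Ord}, so Proposition~\ref{prop:cyclic} produces an abelian variety $B$ in the same isogeny class that is cyclic, i.e.\ $B(\F_q)$ is a cyclic group. Since $B$ is isogenous to $A$, it too is square-free and ordinary, and $\#B(\F_q) = \#A(\F_q) = m$; hence $B(\F_q) \cong \Z/m\Z$, as desired.

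I do not expect any genuine obstacle here: the only nonformal ingredient is Theorem~\ref{thm:sufflarge} (the result of Van Bommel et al.), and the hypothesis $m \geq q^{3\sqrt{q}\log q}$ enters solely through that theorem, the rest of the argument being a direct invocation of Proposition~\ref{prop:cyclic} and the isogeny invariance of the point count. If one wanted a self-contained statement, it might be worth remarking that the same reasoning applies verbatim with {\bf CS} in place of {\bf Ord} when $q$ is prime, but this is not needed for the corollary as stated.
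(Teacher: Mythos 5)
Your proposal is correct and matches the paper's (implicit) argument exactly: the corollary is obtained by combining Theorem~\ref{thm:sufflarge} with Proposition~\ref{prop:cyclic} and the isogeny invariance of the point count, precisely as you describe.
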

Alternatively, we prove the following result concerning \emph{geometrically simple} abelian varieties.
As a trade-off for this stronger condition on the abelian variety, there is no effective lower bound for the size of the group in the following theorem.
\begin{thm}
\label{thm:GcyclicFq}
	Fix a prime power $q$ and let $n$ be sufficiently large with respect to $q$.  There is a geometrically simple ordinary abelian variety $A$ over $\F_q$ with~$A(\F_q)\cong \Z/n\Z$.
\end{thm}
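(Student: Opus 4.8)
The plan is to combine the existence results for square-free ordinary abelian varieties over $\F_q$ with large point counts (Theorem \ref{thm:sufflarge}) with the cyclicity statement (Proposition \ref{prop:cyclic}), but now one must additionally arrange that the resulting abelian variety is geometrically simple, not merely simple. Since by Lemma \ref{lem:sqfreeequiv} an ordinary square-free abelian variety is simple precisely when its Weil polynomial is irreducible, and geometric simplicity is a stronger condition, the strategy is to produce, for each large $n$, a single well-chosen ordinary isogeny class over $\F_q$ whose Weil polynomial is irreducible, remains irreducible after every base extension $\F_q \subseteq \F_{q^k}$, and whose number of rational points is exactly $n$.

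Concretely, first I would invoke Theorem \ref{thm:sufflarge}: for $n$ sufficiently large there is a square-free ordinary abelian variety with $n$ rational points, hence an ordinary Weil polynomial $f$ with $f(1) = n$. Proposition \ref{prop:cyclic} then yields an abelian variety $A$ in that isogeny class with $A(\F_q) \cong \Z/n\Z$. The content of the theorem is therefore to upgrade ``simple'' to ``geometrically simple.'' Here I would use the standard dichotomy for abelian varieties over finite fields: a simple abelian variety fails to be geometrically simple exactly when its Frobenius $\pi$ generates a CM field $K$ in which, after replacing $\pi$ by a suitable power $\pi^k$ corresponding to base change to $\F_{q^k}$, the variety becomes isogenous to a power of a lower-dimensional one — equivalently, when the subfield $\Q(\pi^k)$ is strictly smaller than $K = \Q(\pi)$ for some $k$, or when $K$ contains a root of unity forcing such a degeneration. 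So it suffices to choose the isogeny class so that $\Q(\pi)$ has no proper CM subfield through which some power of $\pi$ factors; this can be guaranteed, for instance, by ensuring the degree of $f$ is a prime $\ell$ (so $K/\Q$ has no intermediate fields at all) and that $f$ stays irreducible over every $\F_{q^k}$, which by a theorem of Zarhin/Howe amounts to a mild condition on the Galois group or on the valuations of $\pi$.

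The key step is thus an existence/counting argument: one needs infinitely many ordinary Weil polynomials $f$ over $\F_q$ of prime degree $\ell$ (with $\ell$ growing) that are geometrically irreducible, and whose values $f(1)$ cover all sufficiently large integers $n$. For this I would follow the construction underlying Theorem \ref{thm:sufflarge} in \cite{vBCLPS21}, which already builds ordinary square-free abelian varieties with prescribed point count by gluing together small building blocks; the refinement is to force the global Weil polynomial to be irreducible and geometrically so, which one can do by adding a suitable ``anchor'' factor of large prime degree whose Newton polygon and Galois behaviour preclude any degeneration under base change, while the remaining freedom is used to hit the target value $f(1) = n$. Because this anchor factor has uncontrolled (large) degree, the size of $n$ for which the argument applies is only ``sufficiently large with respect to $q$'' with no effective bound — exactly as the statement of Theorem \ref{thm:GcyclicFq} warns.

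\emph{The main obstacle} I anticipate is precisely reconciling the two constraints at once: the point-count constructions in \cite{vBCLPS21} are flexible but tend to produce \emph{reducible} Weil polynomials (products of many small factors), whereas geometric simplicity demands irreducibility that survives all base extensions. Bridging this gap — showing that one can perturb or augment the standard construction to land on a geometrically irreducible ordinary Weil polynomial with the exact prescribed value at $1$ — is the crux, and it is the reason the resulting bound on $n$ is ineffective. Once such an $f$ is in hand, Proposition \ref{prop:cyclic} finishes the proof immediately, since the point count $f(1) = n$ is an isogeny invariant and the isogeny class contains a cyclic member.
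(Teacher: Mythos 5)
Your high-level structure matches the paper's: obtain a geometrically simple ordinary abelian variety over $\F_q$ with exactly $n$ rational points, note its Weil polynomial is irreducible (hence the isogeny class is square-free and satisfies {\bf Ord}), and apply Proposition \ref{prop:cyclic} to find a cyclic member of the isogeny class. The final step is fine. The problem is that you treat the first step --- the existence of a \emph{geometrically simple} ordinary abelian variety of prescribed order $n$ for all sufficiently large $n$ --- as something you must build yourself by modifying the construction behind Theorem \ref{thm:sufflarge}, and you never actually carry that construction out: the ``anchor factor of large prime degree whose Newton polygon and Galois behaviour preclude any degeneration'' is asserted, not produced, and you yourself flag this as ``the crux.'' That is a genuine gap. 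The paper closes it with a single citation: \cite[Corollary 1.3]{vBCLPS21} already states precisely that for $n$ sufficiently large (ineffectively) there is a geometrically simple ordinary abelian variety $A_0$ over $\F_q$ with $\#A_0(\F_q)=n$, so no new gluing argument is needed.

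Beyond being incomplete, your sketch of how to force geometric simplicity contains errors. A Weil polynomial of an abelian variety of dimension $g$ has degree $2g$, so it can never have odd prime degree; and in any case $K=\Q(\pi)$ is a CM field, which for $g>1$ always contains its totally real subfield $K^+$ as a proper subfield, so ``$K/\Q$ has no intermediate fields'' is unattainable. The correct criterion (for ordinary, hence commutative endomorphism algebra) is that $A_{\F_{q^k}}$ stays simple if and only if $\Q(\pi^k)=\Q(\pi)$ for all $k\geq 1$, i.e.\ no ratio of conjugates of $\pi$ is a root of unity --- a condition on $\pi$ itself, not on the lattice of subfields of $K$. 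If you wanted to avoid citing \cite[Corollary 1.3]{vBCLPS21}, you would need to verify this condition for an explicit family hitting every large $n$ as $f(1)$, which is substantially more work than your outline suggests.
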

\begin{proof}
    When $n$ is sufficiently large, another result \cite[Corollary 1.3]{vBCLPS21} from the same paper as Theorem \ref{thm:sufflarge} proves that there is a geometrically simple ordinary abelian variety $A_0$ over $\F_q$ with~$\vert A_0(\F_q)\vert = n$.  By Proposition \ref{prop:cyclic}, there is an abelian variety $A$ isogenous to $A_0$ which is cyclic, which concludes the theorem.
\end{proof}

  \section{Proof of Main Theorem \ref{thm:main3}}\label{sec:infinite}
The goal of this section is to prove Main Theorem \ref{thm:main3}.  As a foundation, we have the following result of Kedlaya regarding cardinality.

    \begin{thm}[Theorem 1.1, \cite{Ked21}]
    Every positive integer is equal to the order of the group of rational points of infinitely many simple abelian varieties (of various dimensions) over $\F_2$.
    \end{thm}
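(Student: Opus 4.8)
The plan is to work entirely on the level of Weil polynomials. By Honda--Tate theory together with the equivalence recorded in Lemma~\ref{lem:sqfreeequiv}, a simple ordinary abelian variety over $\F_2$ is the same datum as an irreducible ordinary Weil $2$-polynomial $f$, and its number of rational points equals $f(1)$ (this is the identity $\#A(\F_q)=f_A(1)$ used in the proof of Proposition~\ref{prop:cyclic}). Abelian varieties of different dimensions are non-isogenous, and for a fixed degree $2g$ the condition $f(1)=m$ together with the Weil bounds on the coefficients leaves only finitely many candidate polynomials; hence the infinitude must come from growing the dimension, and it suffices to produce, for each fixed $m\geq 1$, irreducible ordinary Weil $2$-polynomials of arbitrarily large degree with $f(1)=m$.

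First I would pass to the real Weil polynomial. Writing $s=t+2/t$, every Weil $2$-polynomial $f$ of degree $2g$ has the form $f(t)=t^g h(t+2/t)$ for a unique monic $h\in\Z[s]$ of degree $g$ whose roots $\beta_i=\alpha_i+\bar\alpha_i$ lie in $[-2\sqrt2,2\sqrt2]$, and conversely every such $h$ arises this way. A direct expansion gives $f(1)=h(3)$, and the coefficient of $t^g$ in $f$ is congruent to $h(0)$ modulo $2$, so $f$ is ordinary if and only if $h(0)$ is odd. Moreover, a root $\beta_i=\pm2\sqrt2$ would force the real Weil number $\pm\sqrt2$ and hence a rational quadratic factor, so an irreducible $h$ automatically has all roots in the \emph{open} interval; and since a totally real field cannot contain the imaginary $\alpha_i$, the extension degree forces $f$ to be irreducible as soon as $h$ is. Thus the problem becomes purely one about real polynomials: construct, for each $m$, monic irreducible $h\in\Z[s]$ of arbitrarily large degree with all roots in $(-2\sqrt2,2\sqrt2)$, with $h(3)=m$ and $h(0)$ odd.

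To build such $h$ I would start from a flexible family whose roots are safely confined to the interval --- for instance perturbations of rescaled Chebyshev polynomials, or products of fixed small quadratic factors with roots strictly inside $(-2\sqrt2,2\sqrt2)$ --- and use the several free interior coefficients to enforce the two arithmetic conditions $h(3)=m$ and $h(0)$ odd, keeping the roots inside the interval by a continuity or interlacing argument. Irreducibility I would then force for infinitely many degrees by a reduction-modulo-$p$ criterion, arranging the free coefficients so that $h$ reduces to an irreducible polynomial modulo a suitable prime, or alternatively by an Eisenstein-type argument after a shift of variable.

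The hard part is the tension between the archimedean constraint and the arithmetic constraints: confining all $g$ roots to the narrow interval $(-2\sqrt2,2\sqrt2)$ is a rigid positivity condition that severely restricts the coefficients, while prescribing $h(3)=m$ exactly, fixing the parity of $h(0)$, and guaranteeing irreducibility all pull in different directions. This difficulty is sharpest precisely over $\F_2$, where the factor $\sqrt2$ obstructs the naive Chebyshev construction (whose rescaling produces non-integral coefficients), so the perturbations must be arranged to preserve integrality and root-confinement \emph{simultaneously}. This is the step I expect to require the most care, and it is where a genuinely clever explicit choice of family, rather than a soft existence argument, seems to be needed.
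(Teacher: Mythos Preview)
The paper does not prove this theorem. The statement is quoted from Kedlaya \cite{Ked21} and used as a black box: combined with Lemma~\ref{lem:sqfreeequiv}, Remark~\ref{rmk:simpleCS} and Proposition~\ref{prop:cyclic}, it immediately yields Proposition~\ref{prop:infcyclic}, which in turn feeds into Theorem~\ref{thm:infmany}. There is therefore no proof in the present paper to compare your proposal against.

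For what it is worth, your outline is broadly in the spirit of Kedlaya's own argument in \cite{Ked21}: he too passes to the real Weil polynomial and builds explicit families with prescribed value at $s=3$ whose roots are confined to the required interval, and then establishes irreducibility. You have correctly located the crux of the difficulty --- the tension between the archimedean root-confinement condition and the arithmetic constraints $h(3)=m$, parity, and irreducibility --- and correctly flagged that an explicit construction rather than a soft existence argument is what is needed. But what you have written is a strategy, not a proof: the actual family and the irreducibility verification are left entirely open, and those are precisely the steps that constitute the substance of \cite{Ked21}.
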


    Using this result together with Lemma \ref{lem:sqfreeequiv}, Remark \ref{rmk:simpleCS} and Proposition~\ref{prop:cyclic}, we can immediately prove the following result.

    \begin{prop}\label{prop:infcyclic}
    Let $n \geq 1$ be a positive integer. There are infinitely many simple abelian varieties $A$ over $\F_2$ with $A(\F_2)\cong \Z/n\Z$.
    \end{prop}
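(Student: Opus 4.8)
The plan is to combine Kedlaya's cardinality result with the tools developed in Section~\ref{sec:sqfree}. First I would invoke \cite[Theorem 1.1]{Ked21} to obtain infinitely many simple abelian varieties $A_0$ over $\F_2$ with $\vert A_0(\F_2)\vert = n$. The main point is that a simple abelian variety over a prime field need not have irreducible Weil polynomial in general, but here the only obstruction is a real Frobenius root, and by Remark~\ref{rmk:simpleCS} the sole simple isogeny class over $\F_2$ with a real root is the one with Weil polynomial $(x^2-2)^2$, corresponding to supersingular abelian surfaces with $\vert A_0(\F_2)\vert = (1)^2(1)^2 \cdot$\,(appropriate value)\,$= 1$. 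Thus, after discarding at most this one isogeny class (which only matters when $n$ equals its number of points), every simple isogeny class produced by Kedlaya has irreducible Weil polynomial, hence is square-free and satisfies condition {\bf CS} by Lemma~\ref{lem:sqfreeequiv} and Definition~\ref{DelCS}, since $2$ is prime and the Weil polynomial has no real root.

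Next I would apply Proposition~\ref{prop:cyclic}: each such square-free isogeny class satisfying {\bf CS} contains a cyclic abelian variety $A$, that is, one with $A(\F_2)$ cyclic. Since the number of rational points is an isogeny invariant \cite[Theorem~1.(c)]{Tate66}, we have $\vert A(\F_2)\vert = n$, and cyclicity upgrades this to $A(\F_2)\cong \Z/n\Z$. Moreover $A$ lies in the same (simple) isogeny class as $A_0$, so $A$ is itself simple. Finally, because Kedlaya's varieties occur in infinitely many isogeny classes (indeed of unbounded dimension), we obtain infinitely many pairwise non-isogenous, hence non-isomorphic, simple abelian varieties $A$ over $\F_2$ with $A(\F_2)\cong \Z/n\Z$, as claimed.

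The only subtlety to handle carefully is the bookkeeping around the exceptional isogeny class $(x^2-2)^2$ from Remark~\ref{rmk:simpleCS}: one must check whether its number of rational points coincides with $n$ and, if so, simply note that removing a single isogeny class from an infinite family still leaves an infinite family. Since $\#A_0(\F_2) = f_{A_0}(1)$ for the Weil polynomial $f_{A_0}$, the excluded class contributes $(1-2)^2 \cdot (1-2)^2$-type value, which is a fixed integer, so this affects at most one value of $n$ and in any case does not disturb the infinitude. I do not expect any real obstacle here; the proposition is essentially a direct corollary of the earlier results once this minor case is addressed.
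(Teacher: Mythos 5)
Your argument is correct and is exactly the proof the paper intends: the paper states that Proposition~\ref{prop:infcyclic} follows immediately from Kedlaya's theorem together with Lemma~\ref{lem:sqfreeequiv}, Remark~\ref{rmk:simpleCS} and Proposition~\ref{prop:cyclic}, which is precisely the chain of reasoning you spell out (discard the single exceptional class $(x^2-2)^2$, note the remaining classes are square-free and satisfy {\bf CS}, and pick a cyclic representative in each). The only blemish is the garbled point-count for the exceptional class --- it is simply $f(1)=(1-2)^2=1$ --- but this does not affect the argument.
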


    When searching for examples of (possibly non-simple) abelian varieties $A$ over $\F_2$ such that $A(\F_2)$ is isomorphic to a given finite abelian group $G$, there is one way to ``cheat" to find an infinite number of examples: Find one example $A_0$ with $A_0(\F_2)\cong G$ using Theorem \ref{thm:main1-full}, then consider the infinite set $\{A_0\times B \mid \#B(\F_2) = 1\}$.  The fact that there are infinitely many abelian varieties $B$ over $\F_2$ with only one point is a special case of Kedlaya's theorem which was originally proven by Madan and Pal \cite{MP77}.
In order to prove the existence of genuinely interesting infinite storehouses of examples, we find examples with pairwise coprime Weil polynomials. This corresponds to finding examples whose isogeny classes share no simple factors in common. We now prove Main Theorem~\ref{thm:main3}.

    \begin{thm}\label{thm:infmany}
    For every $n\geq 1$, there is an infinite set of Weil $2$-polynomials $\{f_{n,j}(t) \}_{j\geq 1}$ which are pairwise coprime and enjoy the following property. For each $j$, every finite abelian group of cardinality~$n$ arises as the group of rational points of an abelian variety with Weil polynomial $f_{n,j}(t)$.
    \end{thm}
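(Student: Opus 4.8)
The plan is to build, for each fixed $n$, an infinite family of Weil polynomials by ``multiplying in'' more and more coprime simple factors that each contribute only a single rational point, while keeping one fixed reservoir of factors that is flexible enough to realize every abelian group of order $n$. First I would record the key decomposition: if $G$ is an abelian group of order $n$, write $n = \prod_\ell \ell^{a_\ell}$ over the primes $\ell \mid n$, so that $G \cong \prod_\ell G_\ell$ with $G_\ell$ the $\ell$-Sylow subgroup. By Main Theorem~\ref{thm:main1}(1) (in the explicit form of Theorem~\ref{thm:main1-full}), for each prime power $\ell^{a_\ell}$ and each isomorphism type of abelian $\ell$-group $G_\ell$ of that order there is an \emph{ordinary, square-free} abelian variety over $\F_2$ realizing $G_\ell$; since square-free ordinary abelian varieties over $\F_2$ are exactly those whose Weil polynomial is square-free, the union over all $G_\ell$ of the simple factors appearing in these finitely many Weil polynomials is a \emph{finite} set $S_n$ of irreducible Weil $2$-polynomials. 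Let $h_n(t)$ be the product of all polynomials in $S_n$ (or, more economically, a fixed square-free polynomial whose simple factors suffice to assemble every $G$; the point is only that $h_n$ is a fixed square-free Weil polynomial that does not depend on $j$). A crucial observation is that if a square-free Weil polynomial $f(t)$ factors as $f = h_n \cdot g$ with $g$ coprime to $h_n$, then any abelian variety with Weil polynomial $f$ is isogenous to a product $A' \times C$ where $A'$ ranges over the isogeny class of $h_n$ and $C$ over that of $g$; choosing $A'$ to realize $G$ (possible since the needed simple factors all divide $h_n$, and by Proposition~\ref{prop:sqfreeord} the group of points only depends on the ideal, with the product isogeny class realizing all products $A'(\F_2) \times C(\F_2)$) and choosing $C$ with $\#C(\F_2) = 1$, we get a variety with Weil polynomial $f$ and group of points $G \times 0 \cong G$.

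Next I would produce the infinitely many \emph{pairwise coprime} ``padding'' factors with a single point. Kedlaya's theorem, restated above, gives infinitely many simple abelian varieties over $\F_2$ with exactly one rational point; equivalently, infinitely many distinct irreducible Weil $2$-polynomials $g(t)$ with $g(1) = 1$. (This is also classical via Madan--Pal \cite{MP77}, as noted in the excerpt.) Enumerate an infinite sequence $g_1(t), g_2(t), \dots$ of \emph{distinct} irreducible Weil $2$-polynomials, each satisfying $g_j(1) = 1$, and by discarding finitely many also arrange that none of the $g_j$ lies in the fixed finite set $S_n$. Since each $g_j$ is irreducible and they are pairwise distinct, they are pairwise coprime, and each is coprime to $h_n$.

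Then I would assemble the family: set
\[
  f_{n,j}(t) \;=\; h_n(t)\cdot g_j(t).
\]
Each $f_{n,j}$ is a Weil $2$-polynomial (a product of Weil polynomials is a Weil polynomial) and is square-free since $h_n$ is square-free, the $g_j$ are irreducible, and all factors involved are pairwise coprime. For $j \neq j'$ we have $\gcd(f_{n,j}, f_{n,j'}) = 1$: they share the factor $h_n$, so I should instead \emph{remove} the common part — the clean fix is to not repeat $h_n$ but to absorb it differently, so let me restructure this last step: rather than a literal common factor $h_n$, note that it suffices that the isogeny classes share no simple factor, but the theorem demands the polynomials themselves be pairwise coprime. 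To achieve genuine pairwise coprimality I would instead define $f_{n,j}(t) = \tilde h_{n,j}(t)\cdot g_j(t)$ where $\tilde h_{n,j}$ is a square-free Weil $2$-polynomial, coprime to all $g_i$ and to $\tilde h_{n,i}$ for $i \neq j$, whose set of simple factors still suffices to build every abelian group of order $n$. Such $\tilde h_{n,j}$ exist because Theorem~\ref{thm:main1-full}, via Howe--Kedlaya's Theorem~\ref{thm:HK}, actually provides \emph{infinitely many} isogeny classes realizing each prescribed group $G_\ell$ (one can always enlarge the dimension bound $d$ in Theorem~\ref{thm:HK} and obtain a different square-free ordinary isogeny class with the same number of points, then apply Proposition~\ref{prop:cyclic}); hence for each $j$ one can choose a fresh collection of simple factors disjoint from everything used for $i < j$ and from $g_1, \dots, g_j$, a standard greedy/inductive construction. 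Having fixed $\tilde h_{n,j}$ and $g_j$, the pair is coprime, so $f_{n,j}$ is square-free; any abelian variety $A$ with Weil polynomial $f_{n,j}$ decomposes up to isogeny as $A' \times C$ with $A'$ in the isogeny class of $\tilde h_{n,j}$ and $C$ in that of $g_j$, and by Proposition~\ref{prop:sqfreeord} we may choose $A'$ realizing the given $G$ (assembling the $\ell$-parts by the Chinese Remainder Theorem and Proposition~\ref{prop:cyclic}-style ideal choices, exactly as in the proof of Theorem~\ref{thm:main1-full}) and $C$ with $\#C(\F_2) = 1$, giving $A(\F_2) \cong G$.

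The main obstacle, and the step requiring the most care, is precisely the bookkeeping that makes the $\tilde h_{n,j}$ genuinely disjoint across $j$ while each still carries enough simple factors to realize \emph{every} group of order $n$: one must check that Howe--Kedlaya's construction (Theorem~\ref{thm:HK}) really does yield infinitely many \emph{distinct} square-free ordinary isogeny classes over $\F_2$ with a prescribed point count $\ell^{a_\ell}$ — which follows by letting the dimension grow, since for each $d$ large enough there is such a class of dimension $\le d$, and a bounded-dimension constraint would bound the number of classes, so infinitely many dimensions force infinitely many classes — and then that a greedy choice avoids the finitely many ``forbidden'' factors at each stage. Everything else (products of Weil polynomials are Weil polynomials; square-freeness is preserved under coprime products; the group of points of a product is the product of the groups; $\#C(\F_2) = 1$ padding exists in infinite supply) is routine given the results already in the excerpt, especially Proposition~\ref{prop:cyclic}, Proposition~\ref{prop:sqfreeord}, and Lemma~\ref{lem:sqfreeequiv}.
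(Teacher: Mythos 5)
Your construction has a decisive gap at the point where you claim an abelian variety $A'$ with Weil polynomial $h_n$ (or $\tilde h_{n,j}$) can be chosen to realize an arbitrary group $G$ of order $n$. The number of rational points is an isogeny invariant equal to $h_n(1)$, so \emph{every} variety with Weil polynomial $h_n$ has exactly $h_n(1)$ points. If $h_n$ is the product of \emph{all} simple factors appearing in realizations of \emph{all} the groups $G_\ell$ (your set $S_n$), then $h_n(1)$ is the product of all the corresponding point counts, which exceeds $n$ as soon as $\ell^{a_\ell}$ divides $n$ with $a_\ell\geq 2$: for $n=9$ you would be multiplying factors realizing $\Z/9\Z$ against factors realizing $(\Z/3\Z)^2$, so every variety with Weil polynomial $h_n g_j$ has $81$ points and cannot have group of order $9$. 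Your parenthetical ``more economically, a polynomial whose simple factors suffice to assemble every $G$'' gestures at the fix but never supplies the mechanism, and your stated mechanism (``choose $A'$ to realize $G$'') presupposes what must be proved. The paper's proof is precisely the missing mechanism: write $n=\ell_1\cdots\ell_r$ as a product of primes \emph{with multiplicity}, choose simple $A_i$ with $A_i(\F_2)\cong\Z/\ell_i\Z$ and distinct irreducible Weil polynomials, so that $h_n=\prod_i f_{A_i}$ satisfies $h_n(1)=n$; then an arbitrary $G\cong\prod_k\Z/m_k\Z$ with $\prod_k m_k=n$ is obtained by partitioning $\{1,\dots,r\}$ into blocks $S_k$ with $\prod_{j\in S_k}\ell_j=m_k$ and applying Proposition~\ref{prop:cyclic} to each square-free sub-product $\prod_{j\in S_k}A_j$ to produce a cyclic variety with exactly $m_k$ points, all without changing the total Weil polynomial.

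A second gap concerns your supply of pairwise disjoint $\tilde h_{n,j}$: you justify the existence of infinitely many distinct square-free ordinary isogeny classes with a prescribed point count by ``letting $d$ grow'' in Theorem~\ref{thm:HK}. That is a non sequitur: the theorem produces, for each $d$, some variety of dimension \emph{at most} $d$, and one fixed low-dimensional class already witnesses the conclusion for every larger $d$, so nothing forces new classes to appear. The correct tool is Kedlaya's theorem in the form of Proposition~\ref{prop:infcyclic} (infinitely many \emph{simple} varieties with any prescribed cyclic group of points), which you invoke only for the one-point padding. Used for every prime factor $\ell_i$, it renders the padding and the greedy bookkeeping unnecessary: for distinct $j$ one simply selects entirely disjoint collections of simple factors $\{A_i\}$, and pairwise coprimality of the resulting $f_{n,j}$ is automatic. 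Your overall architecture (fixed reservoir plus coprime padding) could be repaired along these lines, but as written both the assembly step and the infinitude step fail.
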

    \begin{proof}
    Write $n = \ell_1\dots \ell_r$, where $\ell_1, \dots, \ell_r$ are primes that are not necessarily distinct.
    By Proposition \ref{prop:infcyclic}, for each ${1\leq i\leq r}$, there are infinitely many simple abelian varieties $A_i$  over $\F_2$ such that $A_i(\F_2) \cong \Z/\ell_i\Z$.
    By Remark \ref{rmk:simpleCS}, there is only one simple isogeny class over $\F_2$ whose Weil polynomial has real roots; see \cite[\href{https://www.lmfdb.org/Variety/Abelian/Fq/2/2/a_ae}{{2.2.a\_ae}}]{lmfdb}.
    Hence, by Lemma \ref{lem:sqfreeequiv}, we can choose the $A_i$
    to have distinct irreducible Weil polynomial.
    Let $\cC$  be the isogeny class of $A_1 \times \dots \times A_r$.
    Note that $\cC$ is square-free by construction.

    Because each sub-product $\prod_{j\in S} A_j$ for $S\subset \{1,\dots, r\}$ is also square-free, we can apply Proposition \ref{prop:cyclic} to get a cyclic isogenous variety $A_S$ satisfying $A_S(\F_2) \cong \Z/(\prod_{j\in S}\ell_j)\Z$. In this way, we obtain all abelian groups of cardinality $n$ as the group of rational points of an abelian variety in~$\cC$.
  \end{proof}

  \begin{remark}
      With the currently technology we are not able to determine whether given an arbitrary finite abelian group $G$ there are infinitely many \emph{simple} abelian varieties over $\F_2$ with group of rational points isomorphic to $G$.
      A possible approach to this questions is to study which groups can occur in a given isogeny class.
  \end{remark}

\bibliographystyle{amsalpha}
\newcommand{\etalchar}[1]{$^{#1}$}
\def\cprime{$'$}
\providecommand{\bysame}{\leavevmode\hbox to3em{\hrulefill}\thinspace}
\providecommand{\MR}{\relax\ifhmode\unskip\space\fi MR }
\providecommand{\MRhref}[2]{%
  \href{http://www.ams.org/mathscinet-getitem?mr=#1}{#2}
}
\providecommand{\href}[2]{#2}

\end{document}